\documentclass[11pt, a4paper,reqno]{amsart}

\usepackage{amsmath}
\usepackage{amssymb}
\usepackage{amsthm}
\usepackage[shortlabels]{enumitem}
\usepackage[british]{babel}
\usepackage{hyperref}

\calclayout
\hypersetup{colorlinks=true,linkcolor=blue,citecolor=blue,urlcolor=blue}

\theoremstyle{definition}
\newtheorem{theorem}{Theorem}[section]
\newtheorem{remark}[theorem]{Remark}
\newtheorem{lemma}[theorem]{Lemma}
\newtheorem{coro}[theorem]{Corollary}
\numberwithin{equation}{section}

\newcommand{\abs}[1]{\left\lvert#1\right\rvert}
\newcommand{\norm}[1]{\left\|#1\right\|}
\newcommand{\R}{\mathbb R}
\newcommand{\N}{\mathbb N}
\newcommand{\Z}{\mathbb Z}
\renewcommand{\epsilon}{\varepsilon}
\newcommand{\A}{\mathcal A}
\newcommand{\dd}{\, \mathrm d}
\newcommand{\id}{\mathrm{Id}}
\newcommand{\cF}{\mathcal F}
\newcommand{\D}{\mathcal D}
\newcommand{\m}{\mathbf m}
\newcommand{\homd}{\mathsf Q}
\newcommand{\cW}{\mathcal W}
\newcommand{\E}{\mathcal E}
\newcommand{\cS}{\mathcal S}
\DeclareMathOperator{\supp}{supp}
\renewcommand{\L}{\operatorname{L}}
\newcommand{\C}{\operatorname{C}}
\renewcommand{\H}{\operatorname{H}}
\newcommand{\W}{\operatorname{W}}
\DeclareRobustCommand{\Hdot}{\dot{\H}\protect{\vphantom{H}}}
\DeclareRobustCommand{\Wdot}{\dot{\W}\protect{\vphantom{W}}}
\newcommand{\kforc}{\vec{K}}
\newcommand{\kpi}{\vec{K}^{\mathrm{pi}}}

\title[Transfer of regularity by kinetic mollification]{Transfer of regularity by kinetic mollification along critical trajectories}

\author{Lukas Niebel}
\address[Lukas Niebel]{Institut f\"ur Analysis und Numerik, Universit\"at M\"unster\\
Orl\'eans-Ring 10, 48149 M\"unster, Germany.}
\email{lukas.niebel@uni-muenster.de}
\date{\today}

\subjclass[2020]{35H10, 35Q49, 35A30, 42B37 (Primary) 35B65, 35B45, 35A24, 42B25 (Secondary)} 

\keywords{kinetic equations, transfer of regularity, hypoellipticity, kinetic regularity, critical kinetic trajectories, kinetic mollification}

\begin{document}
\allowdisplaybreaks

\begin{abstract}
	We give a trajectory-based proof of transfer-of-regularity estimates \`a la Bouchut--H\"ormander for kinetic equations at the weak scale of local diffusion. The proof is based on kinetic mollification along critical kinetic trajectories, whose endpoint map has precisely the kinetic scaling. Instead of using Fourier multipliers or the Kolmogorov fundamental solution, we work directly in physical space. The key step is an explicit formula for the mollification defect and corresponding critical support, size, and difference estimates for the kernels. These bounds yield the sharp, scale-invariant homogeneous estimates through maximal-function and Littlewood--Paley estimates on the kinetic group.
\end{abstract}

\maketitle

\section{Introduction}

A central theme in the theory of kinetic partial differential equations is the transfer of regularity from the diffusive variable $v$ to the spatial variable $x$. Suppose that $f=f(t,x,v)\colon \R^{1+2d}\to\R$ enjoys some regularity in $v$, for instance $\nabla_v f\in X$, and that the kinetic transport term is controlled in the sense that
\begin{equation*}
	(\partial_t + v\cdot \nabla_x)f \in Y
\end{equation*}
for suitable function spaces $X$ and $Y$. The question is then how much regularity in the $x$-variable can be inferred from these assumptions. A basic example is the weak-solution setting, where $X=\L^2_{t,x,v}$ and $Y=\L^2_{t,x}\Hdot^{-1}_v$, that is,
\begin{equation*}
	(\partial_t + v\cdot \nabla_x)f = \nabla_v \cdot S_0
\end{equation*}
for some $S_0\in \L^2_{t,x,v}$.

Since the seminal work of H\"ormander \cite{hormander_hypoelliptic_1967}, it has been known that such assumptions imply fractional Sobolev regularity in the $x$-variable. His approach is geometric: regularity in the $\nabla_x$ direction is recovered by commuting the vector fields $\partial_t + v\cdot \nabla_x$ and $\nabla_v$, using the identity
\begin{equation*}
	[\nabla_v,\partial_t + v\cdot \nabla_x] = \nabla_x.
\end{equation*}
This yields a fractional Sobolev gain with a small, implicit exponent $s>0$.

A major advance was made by Bouchut \cite{bouchut_hypoelliptic_2002}, who proved transfer-of-regularity estimates in $\L^p$ and, for the first time, obtained the optimal exponent $s=\frac13$. More precisely, if $f\in \L^p_{t,x,v}$, $\nabla_v f\in \L^p_{t,x,v}$, and
\begin{equation*}
	(\partial_t + v\cdot \nabla_x)f = \nabla_v \cdot S_0
\end{equation*}
for some $S_0\in \L^p_{t,x,v}$, then
\begin{equation*}
	\norm{D_x^{1/3}f}_{\L^p_{t,x,v}} \lesssim \norm{\nabla_v f}_{\L^p_{t,x,v}}^{\frac23}\norm{S_0}_{\L^p_{t,x,v}}^{\frac13},
\end{equation*}
where $D_x^s := (-\Delta_x)^{\frac{s}{2}}$ for any $s>0$. A scaling argument shows that, for global estimates, $\frac13$ is the correct gain of regularity in $x$, and that the estimate is sharp. Bouchut's proof relies on Fourier analysis and multiplier estimates. Another way to quantify this phenomenon is through velocity averaging lemmas \cite{MR0808622,MR0923047,MR1932972}; in fact, Bouchut's theorem implies optimal estimates for velocity averages.

Bouchut's result was later reproved in the Hilbertian case $p=2$ in \cite{MR2948893} by working with characteristics at the Fourier level. The same transfer of regularity can also be derived from the fundamental solution of the Kolmogorov equation
\begin{equation*}
	(\partial_t + v\cdot \nabla_x)f - \Delta_v f = \nabla_v \cdot \tilde{S}_0,
\end{equation*}
see \cite{niebel_kinetic_2021,AIN,an_kfpLp_2025}. The idea is to first prove an estimate on solutions to the Kolmogorov equation with divergence source term and then apply this estimate to the source term $\tilde{S}_0 = -\nabla_v f +S_0$. Bouchut's theorem was recovered and extended in \cite{AIN,an_kfpLp_2025} via this strategy.
A related argument based on the fundamental solution was used in \cite{guerand_quantitative_2022} to obtain transfer of regularity for weak subsolutions.

Moreover, H\"ormander's commutator method yields the critical exponent $s=\frac13$ in a weaker Besov-type norm; see \cite{MR4413303,albritton2021variational} and also \cite[Section 4]{bouchut_hypoelliptic_2002}. For strong solutions, one obtains an analogous gain under the assumptions $(\partial_t + v \cdot \nabla_x)f \in \L^p_{t,x,v}$ and $\Delta_v f\in \L^p_{t,x,v}$, namely $D_x^{\frac23}f\in \L^p_{t,x,v}$; see \cite{bouchut_hypoelliptic_2002,MR4336467,hirao2026anisotropicmaximallpregularityestimates,MR4444079}.

Analogous statements also hold for fractional regularity in $v$. Bouchut \cite{bouchut_hypoelliptic_2002} proved in the strong-solution setting that if $(\partial_t + v\cdot \nabla_x)f\in \L^p_{t,x,v}$ and $D_v^\beta f\in \L^p_{t,x,v}$, then $D_x^{\frac{2\beta}{2\beta+1}}f\in \L^p_{t,x,v}$. For $p=2$, this was extended in \cite{AIN,niebel_kinetic_2021} to a broad scale of kinetic spaces, including the weak-solution setting. More recently, \cite{an_kfpLp_2025} established transfer-of-regularity estimates in $\L^p$-based kinetic Sobolev spaces at every regularity scale, again including weak $\L^p$ solutions with fractional regularity in $v$.

\medskip

In the present work we develop a trajectory-based approach to transfer-of-regularity based on kinetic mollification along critical kinetic trajectories, following \cite{dietert2025criticaltrajectorieskineticgeometry}; see also \cite{niebel2023kinetic,anceschi2024poincare,dietert2025nashsgboundkolmogorov}. Our method can be viewed as a quantitative, physical-space, commutator-free realisation of H\"ormander's original commutator argument. It avoids explicit computations in Fourier variables and does not rely on the fundamental solution, while bypassing H\"ormander's detour through the interpolation space corresponding to $(\partial_t + v\cdot \nabla_x)^{\frac12}f\in \L^2_{t,x,v}$; compare \cite{hormander_hypoelliptic_1967,MR4413303,albritton2021variational}.

The contribution is methodological.
The sharp Sobolev estimate stated in Theorem \ref{thm:kintorLpGeneral} is known in several forms, beginning with Bouchut's Fourier-multiplier proof \cite{bouchut_hypoelliptic_2002} and, in a broader kinetic Sobolev framework, through the Kolmogorov-equation approach of \cite{AIN,an_kfpLp_2025}. The point of the present note is not to claim a new inequality, but to give a direct proof mechanism which works at the level of the kinetic geometry itself.

The difficult part is to realise the formal commutator identity
\[
	[\nabla_v,\partial_t+v\cdot\nabla_x]=\nabla_x
\]
by an actual averaging operator at the critical scale. A generic averaging along characteristics does not give the right spatial gain. One needs kinetic trajectories whose endpoint map is non-degenerate at the anisotropic kinetic scale
\[
	t\sim r^2,\qquad v\sim r,\qquad x\sim r^3,
\]
and whose inverse has exactly the losses $r^{-3}$ in the spatial variable and $r^{-1}$ in the velocity variable, i.e.\ works at the scale of weak solutions; see \textbf{(M2)}--\textbf{(M3)} below. This is the role of the oscillatory forcings $g_1,g_2$. They encode the missing spatial direction in physical space and produce an approximate identity $T_{K_\tau}$ whose defect admits the exact representation \eqref{eq:repfTtauf}.

This representation is the main technical device of the paper. Once it is available, the exponent $\frac13$ appears from the elementary scale considerations of the kernels rather than from a Fourier multiplier calculation or from estimates on the Kolmogorov fundamental solution. This provides a reusable physical-space mechanism to prove transfer of regularity.     This appears to be the first proof of Bouchut's theorem that does not rely on Fourier calculations or the Kolmogorov fundamental solution.

The present paper is organised as follows. In Section \ref{sec:kintraj} we recall critical kinetic trajectories and the associated kinetic mollification. In Section \ref{sec:torHB} we prove transfer-of-regularity estimates in H\"ormander's Besov spaces, and in Section \ref{sec:torLp} we establish their $\L^p$-based Sobolev counterparts.

For simplicity, we prove all estimates for smooth functions and do not discuss the extension to general elements of kinetic Sobolev spaces; for the relevant technical arguments, see \cite{silvestre_boundary_2022,an_kfpLp_2025,dietert2025criticaltrajectorieskineticgeometry}. Throughout, $d\in \N$ denotes the dimension. The subscripts $t$, $x$, and $v$ refer to the time variable, the spatial variables, and the velocity variables, respectively. We use these subscripts consistently for functions, gradients, Lebesgue spaces $\L^p$, and Sobolev spaces $\H^s$, whenever the meaning is clear from the context. Finally, the symbols $\lesssim$, $\gtrsim$, and $\approx$ denote comparisons up to universal constants. In proofs, $C$ denotes a universal constant that may change from line to line.

\textbf{Acknowledgements.}
This work was motivated by helpful correspondence with Fran\c{c}ois Bouchut concerning the problem of proving transfer of regularity without relying on Fourier-transform methods. The author also thanks Pascal Auscher, Helge Dietert, Cyril Imbert, Cl\'ement Mouhot, and Rico Zacher for valuable discussions.
This work is supported by the Deutsche Forschungsgemeinschaft (DFG, German Research Foundation) under Germany's Excellence Strategy EXC 2044/2--390685587, Mathematics M\"unster: Dynamics--Geometry--Structure.

\section{Kinetic trajectories and kinetic mollification} \label{sec:kintraj}
Our approach is based on mollification in the direction of critical kinetic trajectories as proposed in \cite{dietert2025criticaltrajectorieskineticgeometry}. We recall them for the convenience of the reader.
The adjective ``critical'' refers to the fact that the trajectories below are tuned to the exact hypoelliptic scaling relevant for the gain $D_x^{1/3}$. The construction is not merely a convenient parametrisation of characteristics.

All $2 \times 2$ matrices act on $\R^{2d}$ as their tensor with $\id_d$ and $(M)_{i;j}$ denotes the $(i,j)$-block (a scalar multiple of \(\id_d\)) of a $2\times2$ block matrix.

\subsection{Critical kinetic trajectories}
We consider the two forcings $g_1,g_2 \colon [0,\infty) \to \R$ defined by
\begin{align*}
	g_1(r) =  r^{3} \sin \log r
	\quad\text{ and }\quad
	g_2(r) =  r^{3} \cos \log r
\end{align*}
for $r>0$ and continuously extended by $0$ at $r = 0$.

Given $(t,x,v) \in \R^{1+2d}$, $m_0 \in \R\setminus\{0\}$ and $(m_1,m_2) \in \R^{2d}$ we define
\[
	\gamma^{\mathbf m} = (\gamma^{\mathbf m}_t,\gamma^{\mathbf m}_x,\gamma^{\mathbf m}_v) \colon [0,\infty) \times \R^{1+2d}\times \R^{1+2d} \to \R^{1+2d}
\]
with $\mathbf m = (m_0,m_1,m_2)$ as
\begin{equation} \label{eq:gammam}
	\gamma^{\mathbf m}(r;(t,x,v))
	:= \begin{pmatrix}
		t+m_0 r^2 \\
		\E_{m_0}(r)\begin{pmatrix}
			           x \\
			           v
		           \end{pmatrix} + \D_{m_0} \cW(r)\D_{m_0}^{-1}
		\begin{pmatrix}
			m_1 \\  m_2
		\end{pmatrix}
	\end{pmatrix}
\end{equation}
where we have denoted, for $\delta \in \R$ and $r \in [0,\infty)$,
\begin{equation} \label{eq:defDandE}
	\D_{\delta} := \begin{pmatrix}
		\delta \, \id_d & 0     \\
		0               & \id_d
	\end{pmatrix}, \quad
	\E_\delta(r) = \begin{pmatrix}
		\id_d & \delta r^2 \, \id_d \\
		0     & \id_d
	\end{pmatrix} \mbox{ and } \cW(r) = \begin{pmatrix}
		g_1(r)                  & g_2(r)                  \\
		\frac{\dot{g}_1(r)}{2r} & \frac{\dot{g}_2(r)}{2r}
	\end{pmatrix}.
\end{equation}
We interpret the entries $\frac{\dot g_i(r)}{2r}$ at $r=0$ by continuous extension. With this convention the curves $r\mapsto \gamma^{\m}(r;(t,x,v))$ are absolutely continuous on compact intervals, although the entries $\frac{\dot g_i(r)}{2r}$ need not be classically differentiable at $r=0$. All derivatives in $r$ below are therefore understood for $r>0$.

The mapping $\gamma^{\mathbf m}$ satisfies the following properties.
\begin{enumerate}[itemsep=0.2cm]
	\item[\hypertarget{link:M1}{\textbf{(M1)}}] It is an  open-ended kinetic trajectory:  $\dot \gamma_x ^{\mathbf m} = \dot \gamma_t ^{\mathbf m} \gamma_v ^{\mathbf m} $ (note that $\dot \gamma_t^{\mathbf m} = 2m_0r$).
	\item[\hypertarget{link:M2}{\textbf{(M2)}}] We have $ \det(\D_{m_0} \cW(r)\D_{m_0}^{-1}) = (-2)^{-d} r^{4d}$ for all $r \in [0,\infty)$.
	\item[\hypertarget{link:M3}{\textbf{(M3)}}] We have
	      \[
		      \abs{((\D_{m_0} \cW(r)\D_{m_0}^{-1})^{-1})_{i;1}} \lesssim (1+\abs{{m_0}}^{-1}) r^{-3}
	      \]
	      and
	      \[
		      \abs{((\D_{m_0} \cW(r)\D_{m_0}^{-1})^{-1})_{i;2}} \lesssim (1+\abs{{m_0}}) r^{-1}
	      \]
	      for $i=1,2$ and $r \in (0,\infty)$.
	\item[\hypertarget{link:M4}{\textbf{(M4)}}] The following bounds hold:
	      \begin{equation*}
		      \begin{cases}
			      \abs{\dot{\gamma}_v^{\mathbf m}(r)} \lesssim \left( \frac{\abs{m_1}}{\abs{m_0}} + \abs{m_2} \right), \\
			      \abs{\gamma_v^{\mathbf m}(r)-v} \lesssim \left( \frac{\abs{m_1}}{\abs{m_0}} + \abs{m_2} \right)r,    \\
			      \abs{\gamma_x^{\mathbf m}(r)-x-m_0vr^2} \lesssim \left( {\abs{m_1}} + \abs{m_0}\abs{m_2} \right)r^3
		      \end{cases}
	      \end{equation*}
	      for all $r \in (0,\infty)$.
\end{enumerate}

We introduce $\A_{m_0}(r) := \D_{m_0} \cW(r)\D_{m_0}^{-1}$.
To simplify notation in later proofs, for $r>0$ we introduce the forcing matrix as
\begin{equation*}
	\cF_{m_0}(r) = \begin{pmatrix}
		\frac{1}{m_0}\partial_r \frac{\dot{g}_1(r)}{2r} & \partial_r\frac{\dot{g}_2(r)}{2r}
	\end{pmatrix}
\end{equation*}
which allows us to write $\dot{\gamma}_v^{\m}(r;(t,x,v)) = \cF_{m_0}(r)\binom{m_1}{m_2}$ for $r>0$.

\begin{remark}
	These trajectories can be constructed as in \cite[Section 2.1]{dietert2025criticaltrajectorieskineticgeometry}. However, in comparison to \cite{dietert2025criticaltrajectorieskineticgeometry} we consider time in a quadratic scaling $r^2$ instead of linear scaling $r$. Conceptually these choices are equivalent. Moreover, in view of \cite[Section 2.1]{dietert2025criticaltrajectorieskineticgeometry} it is clear how to construct kinetic trajectories that connect any two given points with this scaling.
\end{remark}

\subsection{Phase-space kinetic mollification}

To simplify the following calculations we introduce some notation. We recall that for any $(t,x,v),(s,y,w) \in \R^{1+2d}$ the kinetic translation group is defined via
\begin{equation*}
	(t,x,v) \circ (s,y,w) = (t+s,x+y+sv,v+w)
\end{equation*}
and inverses are given as $(t,x,v)^{-1} = (-t,-x+tv,-v)$. We write $\homd = 4d+2$ for the homogeneous dimension.

Let $N \in \N$. To any kernel $J \colon \R^{1+2d} \to \R^N$ we associate the integral operator along kinetic translation as follows
\begin{equation} \label{eq:defTJ}
	[T_J f](t,x,v) = \int_{\R^{1+2d}} J\left( (t,x,v)^{-1} \circ \m \right) \cdot f(\m) \dd \m
\end{equation}
for $(t,x,v) \in \R^{1+2d}$ and $f \colon \R^{1+2d} \to \R^N$ and where we write $\m = (m_0,m_1,m_2) \in \R^{1+2d}$. For convenience we provide $(t,x,v)^{-1} \circ \m = (m_0-t,m_1-x-(m_0-t)v,m_2-v)$ and note that
\begin{equation} \label{eq:Tjalternative}
	[T_J(f)](t,x,v) = \int_{\R^{1+2d}} J(s,y,w)\cdot f((t,x,v)\circ(s,y,w)) \dd (s,y,w).
\end{equation}
In the literature this is also known as kinetic convolution and often denoted by $\ast_{\rm kin}$.
Here and below, if $f$ and $J$ take values in $\R^N$, then $J \cdot f$ denotes the Euclidean inner product in $\R^N$. For $N=1$ this is ordinary multiplication.

We consider any nonnegative function $\psi \in \C_c^\infty(\R^{1+2d})$ with unit mass and support in
\begin{equation*}
	\supp \psi \subset (-2,-1) \times B_1(0) \times B_1(0).
\end{equation*}
Consequently, whenever a kernel below contains an expression of the form
\[
	\psi\left(\frac{s}{r^2},\A_{\frac{s}{r^2}}(r)^{-1}\binom{y}{w}\right),
\]
we understand it to be zero if $s/r^2\notin(-2,-1)$. Thus $\A_{s/r^2}(r)^{-1}$ is only evaluated for nonzero $s/r^2$.

The idea of kinetic mollification is to consider the averaging of a function $f = f(t,x,v)$ in the direction of a kinetic trajectory with parameter $\m$ at artificial time $\tau$:
\begin{equation*}
	(t,x,v) \mapsto \int_{\R^{1+2d}} f(\gamma^{\m}(\tau;(t,x,v))) \psi(\m) \dd \m.
\end{equation*}

By a change of variables we may rewrite this as an integral operator along kinetic translation:
\begin{equation*}
	T_{K_\tau}(f) = \int_{\R^{1+2d}} f(\m) K_\tau\left( (t,x,v)^{-1} \circ \m \right) \dd \m = \int_{\R^{1+2d}} f(\gamma^{\m}(\tau;(t,x,v))) \psi(\m) \dd \m
\end{equation*}
with
\begin{equation*}
	K_\tau(s,y,w) := 2^{d}\tau^{-\homd} \psi\left( \frac{s}{\tau^2}, \A_{\frac{s}{\tau^2}}(\tau)^{-1}\binom{y}{w} \right).
\end{equation*}
The operator $T_{K_\tau}$ is called kinetic mollification.

In order to use the kinetic mollification to study regularity properties of \(f\), we need to understand the mollification defect \(f-T_{K_\tau}f\). Let \(f\in \cS(\R^{1+2d})\) and assume
\begin{equation*}
	(\partial_t + v \cdot \nabla_x)f = \nabla_v \cdot S_0 + S_1
\end{equation*}
with \(S_0 \in \cS(\R^{1+2d};\R^d)\) and \(S_1 \in \cS(\R^{1+2d})\).

For every \((t,x,v)\in\R^{1+2d}\), the fundamental theorem of calculus gives
\begin{align*}
	f(t,x,v)-[T_{K_\tau}(f)](t,x,v)
	 & = \int_{\R^{1+2d}} \bigl(f(t,x,v)-f(\gamma^{\m}(\tau;(t,x,v)))\bigr)\psi(\m)\,\dd \m                    \\
	 & = - \int_{\R^{1+2d}} \int_0^\tau \frac{\dd}{\dd r} f(\gamma^{\m}(r;(t,x,v))) \dd r \ \psi(\m)  \dd \m .
\end{align*}
Property \textbf{(M1)} yields
\begin{align*}
	\frac{\dd}{\dd r} f(\gamma^{\m}(r;(t,x,v)))
	 & =
	2m_0r\,[(\partial_t + v\cdot \nabla_x) f](\gamma^{\m}(r;(t,x,v))) \\
	 & \quad
	+\dot{\gamma}^{\m}_v(r;(t,x,v))\cdot [\nabla_v f](\gamma^{\m}(r;(t,x,v))).
\end{align*}
Hence
\begin{equation*}
	f(t,x,v)-[T_{K_\tau}(f)](t,x,v)= I_0(t,x,v)+I_1(t,x,v)+I_{\rm forc}(t,x,v),
\end{equation*}
where
\begin{align*}
	I_0(t,x,v)
	 & :=
	-\int_0^\tau \int_{\R^{1+2d}}
	2m_0r\,[\nabla_v \cdot S_0](\gamma^{\m}(r;(t,x,v))) \psi(\m)\,\dd \m\,\dd r, \\
	I_1(t,x,v)
	 & :=
	-\int_0^\tau \int_{\R^{1+2d}}
	2m_0r\,S_1(\gamma^{\m}(r;(t,x,v))) \psi(\m)\,\dd \m\,\dd r,                  \\
	I_{\rm forc}(t,x,v)
	 & :=
	-\int_0^\tau \int_{\R^{1+2d}}
	\dot{\gamma}^{\m}_v(r;(t,x,v))\cdot [\nabla_v f](\gamma^{\m}(r;(t,x,v))) \psi(\m)\,\dd \m\,\dd r .
\end{align*}

Fix \(r>0\) and perform the change of variables
\[
	\tilde{\m}=\gamma^{\m}(r;(t,x,v)).
\]
If we write
\[
	(s,y,w):=(t,x,v)^{-1}\circ \tilde{\m},
\]
then
\begin{equation*}
	s=\tilde m_0-t=m_0r^2,
	\qquad
	\binom{y}{w}
	=
	\binom{\tilde m_1-x-sv}{\tilde m_2-v}
	=
	\A_{m_0}(r)\binom{m_1}{m_2}.
\end{equation*}
Therefore
\begin{equation*}
	m_0=\frac{s}{r^2},
	\qquad
	\binom{m_1}{m_2}
	=
	\A_{\frac{s}{r^2}}(r)^{-1}\binom{y}{w}.
\end{equation*}
Using \textbf{(M2)}, i.e. \(|\det \A_{m_0}(r)|=2^{-d}r^{4d}\), we obtain
\[
	\dd \m = 2^d r^{-\homd}\,\dd \tilde{\m}.
\]

Applying this change of variables to \(I_0\), we obtain
\begin{align*}
	I_0(t,x,v)
	 & =
	-\int_0^\tau \int_{\R^{1+2d}}
	2^{d+1}s\,r^{-\homd-1}
		[\nabla_{\tilde m_2}\cdot S_0](\tilde{\m})
	\psi\left(
	\frac{s}{r^2},
	\A_{\frac{s}{r^2}}(r)^{-1}\binom{y}{w}
	\right)
	\dd \tilde{\m}\,\dd r.
\end{align*}
Integrating by parts in \(\tilde m_2\) gives
\begin{align*}
	I_0(t,x,v)
	 & =
	\int_0^\tau \int_{\R^{1+2d}}
	2^{d+1}s\,r^{-\homd-1}
	S_0(\tilde{\m})       \\
	 & \qquad \cdot
	\left[
		[\nabla_{(m_1,m_2)}\psi]^T
		\left(
		\frac{s}{r^2},
		\A_{\frac{s}{r^2}}(r)^{-1}\binom{y}{w}
		\right)
		\left(\A_{\frac{s}{r^2}}(r)^{-1}\right)_{\cdot;2}
		\right]
	\dd \tilde{\m}\,\dd r \\
	 & =
	\int_0^\tau [T_{\kpi_r}(S_0)](t,x,v)\,\dd r.
\end{align*}

For the lower-order source term \(S_1\), no integration by parts is needed. The same change of variables gives
\begin{align*}
	I_1(t,x,v)
	 & =
	-\int_0^\tau \int_{\R^{1+2d}}
	2^{d+1}s\,r^{-\homd-1}
	S_1(\tilde{\m})
	\psi\left(
	\frac{s}{r^2},
	\A_{\frac{s}{r^2}}(r)^{-1}\binom{y}{w}
	\right)
	\dd \tilde{\m}\,\dd r \\
	 & =
	\int_0^\tau [T_{\widetilde K_r}(S_1)](t,x,v)\,\dd r.
\end{align*}

Finally, since
\[
	\dot{\gamma}^{\m}_v(r;(t,x,v))=\cF_{m_0}(r)\binom{m_1}{m_2},
\]
the same change of variables yields
\begin{align*}
	I_{\rm forc}(t,x,v)
	 & =
	-\int_0^\tau \int_{\R^{1+2d}}
	2^d r^{-\homd}\,
	[\nabla_v f](\tilde{\m}) \\
	 & \qquad \cdot
	\psi\left(
	\frac{s}{r^2},
	\A_{\frac{s}{r^2}}(r)^{-1}\binom{y}{w}
	\right)
	\left[
		\cF_{\frac{s}{r^2}}(r)\A_{\frac{s}{r^2}}(r)^{-1}\binom{y}{w}
		\right]
	\dd \tilde{\m}\,\dd r    \\
	 & =
	\int_0^\tau [T_{\kforc_r}(\nabla_v f)](t,x,v)\,\dd r.
\end{align*}

We have thus shown the representation formulas
\begin{equation} \label{eq:TrfK}
	[T_{K_\tau}(f)](t,x,v) = \int_{\R^{1+2d}} f(\m) K_\tau\left( (t,x,v)^{-1} \circ \m \right) \dd \m
\end{equation}
and
\begin{equation} \label{eq:repfTtauf}
	[f-T_{K_\tau} f](t,x,v)
	=
	\int_0^\tau
	\Big(
	[T_{\kpi_r} (S_0)](t,x,v)
	+
	[T_{\widetilde{K}_r}(S_1)](t,x,v)
	+
	[T_{\kforc_r} (\nabla_v f)](t,x,v)
	\Big)
	\dd r.
\end{equation}
Here, the kinetic kernels are defined for \(r,\tau \in (0,\infty)\) as follows:
\medskip
\begin{enumerate}
	\item \(K_\tau \colon \R^{1+2d} \to \R\),
	      \begin{equation*}
		      K_\tau(s,y,w)
		      =
		      2^{d}\tau^{-\homd}
		      \psi\left(
		      \frac{s}{\tau^2},
		      \A_{\frac{s}{\tau^2}}(\tau)^{-1}\binom{y}{w}
		      \right),
	      \end{equation*}
	\item \(\widetilde{K}_r \colon \R^{1+2d} \to \R\),
	      \begin{equation*}
		      \widetilde{K}_r(s,y,w)
		      =
		      -2^{d+1}s\,r^{-\homd-1}
		      \psi\left(
		      \frac{s}{r^2},
		      \A_{\frac{s}{r^2}}(r)^{-1}\binom{y}{w}
		      \right),
	      \end{equation*}
	\item \(\kforc_r \colon \R^{1+2d} \to \R^d\),
	      \begin{equation*}
		      \kforc_r(s,y,w)
		      =
		      -2^d r^{-\homd}\,
		      \psi\!\left(
		      \frac{s}{r^2},
		      \A_{\frac{s}{r^2}}(r)^{-1}\binom{y}{w}
		      \right)
		      \Bigl[
			      \cF_{\frac{s}{r^2}}(r)\A_{\frac{s}{r^2}}(r)^{-1}\binom{y}{w}
			      \Bigr],
	      \end{equation*}
	\item and \(\kpi_r \colon \R^{1+2d} \to \R^d\),
	      \begin{equation*}
		      \kpi_r(s,y,w)
		      =
		      2^{d+1}s\,r^{-\homd-1}
			      [\nabla_{(m_1,m_2)}\psi]^T\left(
		      \frac{s}{r^2},
		      \A_{\frac{s}{r^2}}(r)^{-1}\binom{y}{w}
		      \right)
		      \left(\A_{\frac{s}{r^2}}(r)^{-1}\right)_{\cdot;2}.
	      \end{equation*}
\end{enumerate}

\subsection{\texorpdfstring{$\L^p$}{Lp} bounds for the kinetic mollification}

Even though the kinetic convolution is not commutative we still have Young-type inequalities due to the isometry of the kinetic shift; compare \cite[Section 2.4]{MR4049224}.

\begin{lemma} \label{lem:young}
	Let $1\le p,q,\theta \le \infty$ with $\frac{1}{q}+1 = \frac{1}{\theta}+\frac{1}{p}$. For any $J \in \L^{\theta}(\R^{1+2d};\R^N)$ we have
	\begin{equation*}
		\norm{T_J(f)}_{\L^q} \le \norm{J}_{\L^{\theta}} \norm{f}_{\L^p}.
	\end{equation*}
	for all $f \in \L^p(\R^{1+2d};\R^N)$.
\end{lemma}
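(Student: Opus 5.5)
The plan is to reduce to the classical Young inequality proof, using that left and right kinetic translations are measure-preserving and that the map $(s,y,w)\mapsto (s,y,w)^{-1}$ is measure-preserving as well. Using \eqref{eq:Tjalternative},
\[
	[T_J f](z)=\int_{\R^{1+2d}} J(\zeta)\cdot f(z\circ\zeta)\dd\zeta, \qquad z=(t,x,v).
\]
The first step is to verify that for each fixed $\zeta=(s,y,w)$ the map $z\mapsto z\circ\zeta=(t+s,x+y+sv,v+w)$ has Jacobian determinant $1$. It is affine in $z$ with triangular block structure, and its diagonal blocks are identities. Likewise, for fixed $z$ the map $\zeta\mapsto z\circ\zeta=(t+s,x+y+sv,v+w)$ is a pure translation in $\zeta$. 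Hence both $\norm{f(\cdot\circ\zeta)}_{\L^p}=\norm{f}_{\L^p}$ and $\norm{f(z\circ\cdot)}_{\L^p}=\norm{f}_{\L^p}$ hold for every $p$.

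The endpoint cases follow directly. For $\theta=1$ (so $q=p$), Minkowski's integral inequality together with the right-translation isometry gives $\norm{T_Jf}_{\L^p}\le\int\abs{J(\zeta)}\norm{f(\cdot\circ\zeta)}_{\L^p}\dd\zeta=\norm{J}_{\L^1}\norm{f}_{\L^p}$. For $q=\infty$ (so $\frac1\theta+\frac1p=1$), Hölder's inequality in $\zeta$ combined with the left-translation invariance gives $\abs{T_Jf(z)}\le\norm{J}_{\L^\theta}\norm{f}_{\L^p}$. Here the pointwise Euclidean inner product in $\R^N$ is handled by Cauchy--Schwarz, $\abs{J\cdot f}\le\abs J\abs f$, so it suffices to treat scalar nonnegative functions $\abs J,\abs f$.

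For the general case, I would run the standard Hölder-splitting argument instead of interpolating. Writing $\abs{J(\zeta)}\abs{f(z\circ\zeta)}$ as the product
\[
	\bigl(\abs J^{\theta}\abs{f(z\circ\zeta)}^{p}\bigr)^{1/q}\cdot\abs{J}^{\theta(1-1/q)}\cdot\abs{f(z\circ\zeta)}^{p(1-1/q)}\abs{f(z\circ \zeta)}^{\,1-p(1-\frac1q)}\ldots,
\]
one applies Hölder with the three exponents $q$, $\frac{p\theta}{\ldots}$ as in the Euclidean proof, that is, exponents $q$, $\frac{1}{1/\theta-1/q}$ and $\frac{1}{1/p-1/q}$. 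One then uses $\int\abs{f(z\circ\zeta)}^p\dd\zeta=\norm f_{\L^p}^p$ for the $f$-factor, takes the $q$-th power, integrates in $z$, and applies Fubini together with $\int\abs{f(z\circ\zeta)}^p\dd z=\norm f_{\L^p}^p$ for each fixed $\zeta$.

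The only point requiring care, and the place where non-commutativity could bite, is that the Euclidean proof uses translation invariance in both variables. Here the needed invariances are exactly the unimodularity facts checked in the first step, so no inversion of the group law is needed. Alternatively, I could fix $J$ and apply Riesz--Thorin between the $\theta$-endpoint bounds, but the direct Hölder argument avoids that detour.
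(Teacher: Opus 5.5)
Your argument is correct and matches what the paper has in mind; the paper gives no proof, only pointing to the isometry of the kinetic shift. You run the Euclidean H\"older-splitting proof of Young's inequality, with translation invariance replaced by the fact that both $z\mapsto z\circ\zeta$ and $\zeta\mapsto z\circ\zeta$ are affine bijections with unit Jacobian.

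Two slips should be fixed in the write-up.
\begin{enumerate}
\item For fixed $z$, the map $(s,y,w)\mapsto(t+s,x+y+sv,v+w)$ is not a pure translation. It is a translation composed with the shear $y\mapsto y+sv$, which still has unit Jacobian, so your conclusion stands.
\item Your displayed factorisation is garbled. It should read $\abs{J}\abs{f}=\bigl(\abs{J}^{\theta}\abs{f}^{p}\bigr)^{1/q}\,\abs{J}^{1-\theta/q}\,\abs{f}^{1-p/q}$, with $f$ evaluated at $z\circ\zeta$. This matches the H\"older exponents $q$, $\frac{1}{1/\theta-1/q}$, $\frac{1}{1/p-1/q}$ that you then list.
\end{enumerate}
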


We start with localisation and estimation of the kernels $K,\widetilde K,\kpi$ and $\kforc$. We define
\[
	B_r^{\rm kin}=\bigl\{(s,y,w)\in\R^{1+2d}: |s|<r^2,\ |y|<r^3,\ |w|<r\bigr\}.
\]

\begin{lemma} \label{lem:KforcKpiloc}
	There exists $C>0$ such that for all $r>0$,
	\begin{equation}\label{eq:supp-size-kernels}
		\begin{aligned}
			\supp K_r \cup \supp \widetilde K_r \cup \supp \kpi_r \cup \supp \kforc_r & \subset B_{Cr}^{\rm kin}, \\
			|K_r|+|\kpi_r|+|\kforc_r|                                                 & \lesssim r^{-\homd},      \\
			|\widetilde K_r|                                                          & \lesssim r^{-\homd+1},    \\
			|\nabla_y K_r|+|\nabla_y \kpi_r|+|\nabla_y \kforc_r|                      & \lesssim r^{-\homd-3},    \\
			|\nabla_y \widetilde K_r|                                                 & \lesssim r^{-\homd-2}.
		\end{aligned}
	\end{equation}
	with $B_r^{\rm kin}:=\bigl\{(s,y,w)\in\R^{1+2d}: |s|<r^2,\ |y|<r^3,\ |w|<r\bigr\}$. Here $\nabla_y$ denotes differentiation in the spatial kernel variable $y$ in $K(s,y,w)$.
\end{lemma}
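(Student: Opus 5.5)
The plan is to read off all bounds from the explicit form of the kernels, using only the support of $\psi$, properties \textbf{(M3)}--\textbf{(M4)}, and the exact structure of $\A_{m_0}(r)$ and $\cF_{m_0}(r)$. Throughout we restrict to the region where the kernels are nonzero. There $m_0:=s/r^2\in(-2,-1)$, so $\abs{m_0}\approx 1$ and all factors $(1+\abs{m_0}^{\pm1})$ are bounded. Also $(m_1,m_2):=\A_{m_0}(r)^{-1}\binom{y}{w}\in B_1\times B_1$.

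\textbf{Support.} We have $\binom{y}{w}=\A_{m_0}(r)\binom{m_1}{m_2}$, and $\A_{m_0}(r)=\D_{m_0}\cW(r)\D_{m_0}^{-1}$ has entries
\[
g_i(r)\ (\text{up to }m_0^{\pm1})\lesssim r^3,\qquad \frac{\dot g_i(r)}{2r}\lesssim r.
\]
Indeed $\dot g_i(r)=r^2(3\sin\log r\pm\cos\log r)$, so $\abs{\dot g_i}\lesssim r^2$. Together with $\abs{m_1},\abs{m_2}<1$ this gives $\abs{y}\lesssim r^3$ and $\abs{w}\lesssim r$; equivalently one can invoke \textbf{(M4)} with $x=v=0$. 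Combined with $\abs{s}<2r^2$, this shows all four supports lie in $B^{\rm kin}_{Cr}$.

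\textbf{Size.} For $K_r$ the bound $r^{-\homd}\norm{\psi}_\infty$ is immediate. For $\widetilde K_r$ we use $\abs{s}\,r^{-\homd-1}\lesssim r^{-\homd+1}$. For $\kpi_r$ the prefactor is $\abs{s}\,r^{-\homd-1}\lesssim r^{-\homd+1}$, and it multiplies $(\A^{-1})_{\cdot;2}=O(r^{-1})$ by \textbf{(M3)}, giving $r^{-\homd}$. For $\kforc_r$ we need $\abs{\cF_{m_0}(r)}\lesssim 1$. This holds because $\dot g_i(r)/(2r)=\tfrac r2(3\sin\log r\pm\cos\log r)$ has $r$-derivative bounded by a constant, and $\abs{m_1},\abs{m_2}<1$ then gives $\abs{\kforc_r}\lesssim r^{-\homd}$.

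\textbf{Derivatives in $y$.} The variable $y$ enters only through $\binom{m_1}{m_2}=\A_{m_0}(r)^{-1}\binom yw$; note that $m_0=s/r^2$ does not depend on $y$. By the chain rule, $\nabla_y$ of this vector is the first block column $(\A_{m_0}(r)^{-1})_{\cdot;1}$, which is $O(r^{-3})$ by \textbf{(M3)}.

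Differentiating $\psi(m_0,\cdot)$ or $\nabla\psi(m_0,\cdot)$ therefore costs a factor $r^{-3}\norm{\psi}_{\C^2}$. In $\kforc_r$ the additional linear factor $\cF_{m_0}(r)\A^{-1}\binom yw$ differentiates to $\cF_{m_0}(r)(\A^{-1})_{\cdot;1}=O(r^{-3})$. All other factors are $y$-independent and were bounded in the size step. Multiplying out gives $\abs{\nabla_y K_r},\abs{\nabla_y\kpi_r},\abs{\nabla_y\kforc_r}\lesssim r^{-\homd-3}$ and $\abs{\nabla_y\widetilde K_r}\lesssim r^{-\homd+1-3}=r^{-\homd-2}$.

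The only point needing genuine care is the inverse-matrix bound \textbf{(M3)}: that the first column of $\A^{-1}$ loses exactly $r^{-3}$ rather than something worse. This is assumed from the paper's listed properties. It can be checked via $\det\cW(r)=-r^4/2$ from \textbf{(M2)} and the cofactor formula: the entries of the first column of $\cW(r)^{-1}$ are of order $r/r^4=r^{-3}$, and those of the second column of order $r^3/r^4=r^{-1}$.
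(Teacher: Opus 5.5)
Your proposal is correct and is essentially the paper's proof. On the support you use $s/r^2\in(-2,-1)$ and $\A_{s/r^2}(r)^{-1}\binom{y}{w}\in B_1(0)\times B_1(0)$, read off the support and size bounds from the entries of $\A_{m_0}(r)$, the uniform boundedness of $\cF_{m_0}(r)$ and \textbf{(M3)}, and get each $\nabla_y$ bound from the single extra factor $(\A_{s/r^2}(r)^{-1})_{\cdot;1}=O(r^{-3})$. Your cofactor check of \textbf{(M3)} via $\det\cW(r)=-r^4/2$ is correct, and the paper omits it. One harmless slip: $\dot g_2(r)=r^2(3\cos\log r-\sin\log r)$, not $r^2(3\sin\log r\pm\cos\log r)$; you only need $\abs{\dot g_i}\lesssim r^2$ and the boundedness of $\partial_r\bigl(\dot g_i(r)/(2r)\bigr)$, and both still hold.
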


\begin{proof}
	If one of the kernels is nonzero, then
	\[
		a:=\frac{s}{r^2}\in(-2,-1),\qquad
		b:=\A_a(r)^{-1}\binom{y}{w}\in B_1(0)\times B_1(0),
	\]
	because $\psi$ is supported in $(-2,-1)\times B_1(0)\times B_1(0)$.
	We have
	\[
		\A_a(r)=\D_a \cW(r)\D_a^{-1}
		=
		\begin{pmatrix}
			g_1(r)                       & a\,g_2(r)              \\[1mm]
			a^{-1}\frac{\dot g_1(r)}{2r} & \frac{\dot g_2(r)}{2r}
		\end{pmatrix},
	\]
	with $a\in[-2,-1]$ and the explicit formulas for $g_1,g_2$ give
	\[
		|g_1(r)|+|g_2(r)|\lesssim r^3,
		\qquad
		\left|\frac{\dot g_1(r)}{2r}\right|
		+
		\left|\frac{\dot g_2(r)}{2r}\right|
		\lesssim r.
	\]
	Hence $|(y,w)|=|\A_a(r)b|$ implies $|y|\lesssim r^3$, $|w|\lesssim r$, while clearly $|s|\lesssim r^2$, so the support inclusion follows.

	For $K_r$ we use its definition:
	\[
		K_r(s,y,w)
		=
		2^d r^{-\homd}\,
		\psi\!\left(\frac{s}{r^2}, \A_{\frac{s}{r^2}}(r)^{-1}\binom{y}{w}\right).
	\]
	Thus $|K_r|\lesssim r^{-\homd}$. Differentiating with respect to $y$ only hits the argument of $\psi$, so one gains one factor
	\[
		\left|\left(\A_{\frac{s}{r^2}}(r)^{-1}\right)_{\cdot;1}\right|\lesssim r^{-3}
	\]
	again by \textbf{(M3)}, and hence $|\nabla_y K_r|\lesssim r^{-\homd-3}$.

	For $\widetilde K_r$ we use
	\[
		\widetilde K_r(s,y,w)
		=
		-2^{d+1}s\,r^{-\homd-1}
		\psi\!\left(\frac{s}{r^2}, \A_{\frac{s}{r^2}}(r)^{-1}\binom{y}{w}\right).
	\]
	On the support we have $|s|\lesssim r^2$, hence $|\widetilde K_r|\lesssim r^{-\homd+1}$.
	Differentiating with respect to $y$ only hits the argument of $\psi$, so we gain one factor
	\[
		\left|\left(\A_{\frac{s}{r^2}}(r)^{-1}\right)_{\cdot;1}\right|\lesssim r^{-3},
	\]
	and therefore $|\nabla_y \widetilde K_r|\lesssim r^{-\homd-2}$.

	For $\kpi_r$ we use its definition:
	\[
		\kpi_r(s,y,w)
		=
		2^{d+1} s\, r^{-\homd-1}
			[\nabla_{(m_1,m_2)}\psi]^T\!\left( \frac{s}{r^2}, \A_{\frac{s}{r^2}}(r)^{-1}\binom{y}{w} \right)
		\left(\A_{\frac{s}{r^2}}(r)^{-1}\right)_{\cdot;2}.
	\]
	On the support we have $|s|\lesssim r^2$, and by \textbf{(M3)}
	\[
		\left|\left(\A_{\frac{s}{r^2}}(r)^{-1}\right)_{\cdot;2}\right|\lesssim r^{-1}.
	\]
	Therefore $|\kpi_r|\lesssim r^{-\homd}$. Differentiating with respect to $y$ only hits the argument of $\psi$, so we gain one factor
	\[
		\left|\left(\A_{\frac{s}{r^2}}(r)^{-1}\right)_{\cdot;1}\right|\lesssim r^{-3}
	\]
	again by \textbf{(M3)}, and hence $|\nabla_y\kpi_r|\lesssim r^{-\homd-3}$.

	For $\kforc_r$ we recall
	\[
		\kforc_r(s,y,w)
		=
		-2^d r^{-\homd}\,
		\psi\!\left(\frac{s}{r^2}, \A_{\frac{s}{r^2}}(r)^{-1}\binom{y}{w}\right)
		\Bigl[\cF_{\frac{s}{r^2}}(r)\A_{\frac{s}{r^2}}(r)^{-1}\binom{y}{w}\Bigr].
	\]
	Since $s/r^2\in[-2,-1]$ on the support, the explicit formula for $\cF_{m_0}(r)$ shows
	\[
		\sup_{r>0}\sup_{m_0\in[-2,-1]} |\cF_{m_0}(r)|<\infty.
	\]
	Moreover, on the support we have
	\[
		\left|\A_{\frac{s}{r^2}}(r)^{-1}\binom{y}{w}\right|\le 1.
	\]
	Thus $|\kforc_r|\lesssim r^{-\homd}$. If we differentiate with respect to $y$, either the derivative falls on $\psi$, or on the linear factor
	\(
	\cF_{\frac{s}{r^2}}(r)\A_{\frac{s}{r^2}}(r)^{-1}\binom{y}{w}.
	\)
	In both cases we gain one factor
	\[
		\left|\left(\A_{\frac{s}{r^2}}(r)^{-1}\right)_{\cdot;1}\right|\lesssim r^{-3},
	\]
	so $|\nabla_y\kforc_r|\lesssim r^{-\homd-3}$.
\end{proof}

\begin{coro}\label{cor:dxhK}
	For all $r>0$ and $h\in\R^d$,
	\begin{equation*}
		\|\Delta_x^h K_r\|_{\L^1}
		+\|\Delta_x^h \kpi_r\|_{\L^1}
		+\|\Delta_x^h \kforc_r\|_{\L^1}
		\lesssim \min\{1,\,|h|\,r^{-3}\}.
	\end{equation*}
	Here $\Delta_x^h$ acts on the spatial kernel variable $y$ as $[\Delta_x^h J](s,y,w) = J(s,y+h,w)-J(s,y,w)$.
\end{coro}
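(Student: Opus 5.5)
The bound follows from Lemma \ref{lem:KforcKpiloc}, treating the two regimes in the minimum separately. Let $J$ be any of $K_r$, $\kpi_r$, $\kforc_r$.

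\emph{First bound: $\|\Delta_x^h J\|_{\L^1}\lesssim 1$.} By the triangle inequality and translation invariance of Lebesgue measure in $y$,
\[
\|\Delta_x^h J\|_{\L^1}\le 2\|J\|_{\L^1}.
\]
The support inclusion in \eqref{eq:supp-size-kernels} gives $|B_{Cr}^{\rm kin}|\approx r^{2}\,r^{3d}\,r^{d}=r^{\homd}$. Combined with the size bound $|J|\lesssim r^{-\homd}$, this yields $\|J\|_{\L^1}\lesssim 1$ uniformly in $r$.

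\emph{Second bound: $\|\Delta_x^h J\|_{\L^1}\lesssim |h|r^{-3}$.} We only need this when $|h|\le r^3$, since otherwise the first bound is the smaller one. By the mean value theorem in $y$,
\[
|J(s,y+h,w)-J(s,y,w)|\le |h|\sup_{\theta\in[0,1]}|\nabla_y J(s,y+\theta h,w)|\lesssim |h|\,r^{-\homd-3},
\]
using the gradient bound from Lemma \ref{lem:KforcKpiloc}. This requires $J$ to be Lipschitz in $y$, i.e.\ differentiable away from a null set. For $K_r$ and $\kpi_r$ this is clear because $\psi$ is smooth. For $\kforc_r$, it is a smooth function times a linear factor, with $s/r^2$ held fixed, so differentiability in $y$ is also clear. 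Note that $s$ is untouched by the $y$-shift, so the matrix $\A_{s/r^2}(r)$ is fixed along the segment.

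The difference $\Delta_x^h J$ is supported in $\supp J\cup(\supp J-(0,h,0))$. When $|h|\le r^3$, this set lies in $B_{C'r}^{\rm kin}$, whose measure is $\lesssim r^{\homd}$. Integrating the pointwise bound over this set gives $\|\Delta_x^h J\|_{\L^1}\lesssim |h|r^{-3}$.

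Summing over the three kernels completes the argument. There is no real obstacle. The only point needing care is that the pointwise difference bound holds uniformly along the segment, and this is immediate because the gradient bounds in Lemma \ref{lem:KforcKpiloc} hold globally, not just on the support.
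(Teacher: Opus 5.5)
Your proof is correct and is essentially the paper's argument. Both regimes come from the support, size and $y$-gradient bounds of Lemma~\ref{lem:KforcKpiloc}, via $\|\Delta_x^h J\|_{\L^1}\le 2\|J\|_{\L^1}\lesssim 1$ and a mean-value estimate in $y$. The only cosmetic difference is in the second bound: the paper writes it as $|h|\,\|\nabla_y J\|_{\L^1}\lesssim |h|\,r^{-3}$, valid for every $h$, whereas you integrate the pointwise bound $|h|\,r^{-\homd-3}$ over the shifted support, which is why you need the harmless restriction $|h|\le r^3$.
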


\begin{proof}
	For $J_r\in\{K_r,\kpi_r,\kforc_r\}$, Lemma \ref{lem:KforcKpiloc} gives
	\[
		\supp J_r\subset B_{Cr}^{\rm kin},\qquad
		|J_r|\lesssim r^{-\homd},\qquad
		|\nabla_y J_r|\lesssim r^{-\homd-3}.
	\]
	Since $|B_{Cr}^{\rm kin}|\lesssim r^{\homd}$, we obtain
	\[
		\|J_r\|_{\L^1}\lesssim 1,
		\qquad
		\|\nabla_y J_r\|_{\L^1}\lesssim r^{-3}.
	\]
	Hence
	\[
		\|\Delta_x^h J_r\|_{\L^1}
		\le
		\min\bigl\{2\|J_r\|_{\L^1},\,|h|\,\|\nabla_y J_r\|_{\L^1}\bigr\}
		\lesssim
		\min\{1,\,|h|\,r^{-3}\}.
	\]
\end{proof}

\begin{coro}\label{cor:dxtildeK}
	Let $1\le \theta\le\infty$ and let $\theta'$ be the H\"older conjugate exponent. Then for all $r>0$ and $h\in\R^d$,
	\begin{equation*}
		\|\widetilde K_r\|_{\L^\theta}
		\lesssim r^{1-\homd/\theta'},
		\qquad
		\|\nabla_y \widetilde K_r\|_{\L^\theta}
		\lesssim r^{-2-\homd/\theta'},
	\end{equation*}
	and
	\begin{equation*}
		\|\Delta_x^h \widetilde K_r\|_{\L^\theta}
		\lesssim
		r^{1-\homd/\theta'}\min\{1,\,|h|\,r^{-3}\}.
	\end{equation*}
\end{coro}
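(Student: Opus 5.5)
The proof will follow the pattern of Corollary \ref{cor:dxhK}, with $\L^1$ replaced by $\L^\theta$. The input is Lemma \ref{lem:KforcKpiloc}:
\[
\supp \widetilde K_r\subset B^{\rm kin}_{Cr},\qquad |\widetilde K_r|\lesssim r^{-\homd+1},\qquad |\nabla_y\widetilde K_r|\lesssim r^{-\homd-2}.
\]
Together with the volume bound $|B^{\rm kin}_{Cr}|\lesssim r^{\homd}$, these pointwise bounds give the $\L^\theta$ norms directly. For $\theta<\infty$ we have
\[
\|\widetilde K_r\|_{\L^\theta}\lesssim r^{-\homd+1}\,|B^{\rm kin}_{Cr}|^{1/\theta}\lesssim r^{1-\homd+\homd/\theta}=r^{1-\homd/\theta'},
\]
using $1-1/\theta=1/\theta'$. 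The case $\theta=\infty$ is just the sup bound, since then $\theta'=1$ and the exponent is $1-\homd$. The estimate for $\nabla_y\widetilde K_r$ is identical, with $r^{-\homd-2}$ in place of $r^{-\homd+1}$, and yields $r^{-2-\homd/\theta'}$.

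For the difference estimate I would argue pointwise and then take norms. The trivial bound is
\[
\|\Delta_x^h\widetilde K_r\|_{\L^\theta}\le 2\|\widetilde K_r\|_{\L^\theta},
\]
since translation in $y$ preserves Lebesgue measure. For the other bound I write
\[
\Delta_x^h\widetilde K_r(s,y,w)=\int_0^1 h\cdot\nabla_y\widetilde K_r(s,y+\sigma h,w)\dd\sigma .
\]
Minkowski's integral inequality and translation invariance of the $\L^\theta$ norm then give
\[
\|\Delta_x^h\widetilde K_r\|_{\L^\theta}\le |h|\,\|\nabla_y\widetilde K_r\|_{\L^\theta}\lesssim |h|\,r^{-2-\homd/\theta'}=r^{1-\homd/\theta'}\,|h|\,r^{-3}.
\]
Taking the minimum of the two bounds gives the claim.

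The only point needing care is the use of $\nabla_y\widetilde K_r$ along the segment. As remarked in the paper, $\psi$ is smooth and $\A_{s/r^2}(r)^{-1}$ is only evaluated where $s/r^2\in(-2,-1)$, so $\widetilde K_r$ is smooth in $y$ for fixed $(s,w)$. Hence the fundamental theorem of calculus in $y$ is legitimate. There is no real obstacle here: the statement is a direct consequence of the size and derivative bounds in Lemma \ref{lem:KforcKpiloc}.
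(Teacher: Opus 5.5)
Your proof is correct and matches the paper's argument: the $\L^\theta$ bounds come from the pointwise size and $\nabla_y$ bounds of Lemma \ref{lem:KforcKpiloc} on a support of volume $\lesssim r^{\homd}$, and the difference estimate is the minimum of $2\|\widetilde K_r\|_{\L^\theta}$ and $|h|\,\|\nabla_y \widetilde K_r\|_{\L^\theta}$. Your extra remarks (the case $\theta=\infty$, Minkowski's inequality along the segment, and smoothness in $y$) only spell out steps the paper leaves implicit.
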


\begin{proof}
	By Lemma \ref{lem:KforcKpiloc},
	\[
		\supp \widetilde K_r\subset B_{Cr}^{\rm kin},
		\qquad
		|\widetilde K_r|\lesssim r^{-\homd+1},
		\qquad
		|\nabla_y \widetilde K_r|\lesssim r^{-\homd-2}.
	\]
	Since $|B_{Cr}^{\rm kin}|\lesssim r^{\homd}$, we obtain
	\[
		\|\widetilde K_r\|_{\L^\theta}
		\lesssim
		r^{-\homd+1}\,|B_{Cr}^{\rm kin}|^{1/\theta}
		\lesssim
		r^{1-\homd+\homd/\theta}
		=
		r^{1-\homd/\theta'}
	\]
	and similarly
	\[
		\|\nabla_y \widetilde K_r\|_{\L^\theta}
		\lesssim
		r^{-\homd-2}\,|B_{Cr}^{\rm kin}|^{1/\theta}
		=
		r^{-2-\homd/\theta'}.
	\]
	The difference estimate follows from
	\[
		\|\Delta_x^h \widetilde K_r\|_{\L^\theta}
		\le
		\min\bigl\{2\|\widetilde K_r\|_{\L^\theta},\,|h|\,\|\nabla_y \widetilde K_r\|_{\L^\theta}\bigr\}.
	\]
\end{proof}

\begin{lemma}\label{lem:commuteDx}
	Let $J\in\mathcal S(\R^{1+2d})$ and $f\in\mathcal S(\R^{1+2d})$.
	Then
	\[
		D_x^{\frac13}\,T_J f \;=\; T_{D_y^{\frac13}J}\,f.
	\]
\end{lemma}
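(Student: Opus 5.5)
The plan is to reduce the identity to the observation that $T_J$ is, in the spatial variable alone, an ordinary Euclidean convolution, and then to use the Fourier characterisation of $D_x^{1/3}$ in $x$ only.

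First, rewrite $T_J f$ via \eqref{eq:Tjalternative}:
\[
[T_Jf](t,x,v)=\int_{\R^{1+2d}} J(s,y,w)\,f(t+s,\,x+y+sv,\,v+w)\dd(s,y,w).
\]
Fix $(s,w)$ and substitute $z=x+y+sv$ in the $y$-integral. This gives
\[
[T_Jf](t,x,v)=\int_{\R^{1+d}}\int_{\R^d} J(s,\,z-x-sv,\,w)\,f(t+s,z,v+w)\dd z\dd(s,w).
\]
For fixed $(t,v,s,w)$, the inner integral is a function of $x$ of the form $x\mapsto\int_{\R^d} j(z-x-sv)\,F(z)\dd z$, with $j=J(s,\cdot,w)$ and $F=f(t+s,\cdot,v+w)$. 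This is a Euclidean correlation, i.e. the convolution of $F$ with the reflection of $j$, followed by a translation by $sv$.

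Second, apply $D_x^{1/3}$, which is the Fourier multiplier $|\xi|^{1/3}$ in $x$. It commutes with translations in $x$ and with convolutions, so it may be placed on either factor. I put it on the kernel, using that the Fourier multiplier of the reflection $y\mapsto j(-y)$ of $j$ is the reflection of $\hat j$, and $|\xi|^{1/3}$ is even. This yields
\[
D_x^{1/3}\!\int j(z-x-sv)F(z)\dd z=\int (D_y^{1/3}j)(z-x-sv)\,F(z)\dd z.
\]
Undoing the substitution $z=x+y+sv$ then gives exactly $T_{D_y^{1/3}J}f$ pointwise.

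Third, the step I expect to be the main obstacle: justifying the interchange of $D_x^{1/3}$ with the outer $(s,w)$-integration. $D_x^{1/3}$ is nonlocal, and $D_y^{1/3}J$ is no longer Schwartz; it only decays like $|y|^{-d-1/3}$, while remaining smooth and bounded. I would handle this on the Fourier side in $x$. Compute the partial Fourier transform of $T_Jf$ in $x$:
\[
\cF_x[T_Jf](t,\xi,v)=\int \overline{\hat J}(s,\xi,w)\,e^{-2\pi i s v\cdot\xi}\,\hat f(t+s,\xi,v+w)\dd(s,w),
\]
where $\hat J$ is the partial Fourier transform in $y$. Multiplying by $|\xi|^{1/3}$ replaces $\hat J$ by $|\xi|^{1/3}\hat J$, which is the partial Fourier transform of $D_y^{1/3}J$. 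Fubini applies throughout because $J$ and $f$ are Schwartz, so all the integrands are absolutely integrable. Since $T_{D_y^{1/3}J}f$ is bounded and integrable in $x$ (by Young's inequality in $x$, with $D_y^{1/3}J(s,\cdot,w)\in\L^1\cap\L^\infty$), it is determined by this Fourier transform, and the identity follows.
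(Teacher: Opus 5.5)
Your argument is correct, but it takes a different route from the paper. The paper stays in physical space. It uses the singular-integral formula $D^{1/3}\phi(z)=c_d\int_{\R^d}\bigl(2\phi(z)-\phi(z+h)-\phi(z-h)\bigr)\abs{h}^{-d-1/3}\dd h$ and proves the translation identity $\delta_h^x(T_Jf)=T_{\delta_h^yJ}f$ by the substitution $y\mapsto y\pm h$. It then integrates in $h$, truncating to $\epsilon<\abs{h}<R$ to justify Fubini. You instead write $T_J$, for fixed $(s,w)$, as a Euclidean correlation in $x$ followed by the shift $x\mapsto x+sv$, and check $\abs{\xi}^{1/3}\mathcal F_x[T_Jf]=\mathcal F_x[T_{D_y^{1/3}J}f]$ directly. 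Both proofs rest on the same structural fact: in the $x$-slot the group law is a pure translation by $y$, so $T_J$ commutes with $x$-translations. They differ in how they realise $D_x^{1/3}$, as a superposition of translations versus as a Fourier multiplier. The paper's version fits its stated aim of avoiding Fourier calculations, and it reuses the difference formula that Step 5 needs anyway to bound $\norm{D_y^{1/3}K_\tau}_{\L^1}$. Yours is shorter and applies verbatim to any multiplier $m(D_x)$, producing $T_{m(-D_y)J}f$. In particular it also justifies the commutation of $P_j$ with $T_J$ asserted in Step 4.

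There are two details to fix. First, with $\hat g(\xi)=\int g(x)e^{-2\pi i x\cdot\xi}\dd x$ the phase is $e^{+2\pi i sv\cdot\xi}$. The kernel factor is $\hat J(s,-\xi,w)$, which equals $\overline{\hat J(s,\xi,w)}$ only for real $J$. Neither affects the conclusion, because $\abs{\xi}^{1/3}$ is even.

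Second, and more substantively, ``Fubini applies because $J$ and $f$ are Schwartz'' does not cover the right-hand side, whose kernel $D_y^{1/3}J$ is not Schwartz. Knowing $D_y^{1/3}J(s,\cdot,w)\in\L^1\cap\L^\infty$ for each fixed $(s,w)$ is not enough. Both the Fubini step and your Young-in-$x$ claim need quantitative control of $\norm{D_y^{1/3}J(s,\cdot,w)}_{\L^1_y}$ in $(s,w)$. Integrability in $(s,w)$ follows from
\[
\norm{D^{1/3}\phi}_{\L^1}\lesssim\int_{\R^d}\frac{\min\{2\norm{\phi}_{\L^1},\abs{h}\norm{\nabla\phi}_{\L^1}\}}{\abs{h}^{d+1/3}}\dd h\lesssim\norm{\phi}_{\L^1}^{2/3}\norm{\nabla\phi}_{\L^1}^{1/3},
\]
applied with $\phi=J(s,\cdot,w)$; this is the same computation the paper performs in Step 5.
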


\begin{proof}
	Since $f,J\in \mathcal S(\R^{1+2d})$, also $T_Jf\in \mathcal S(\R^{1+2d})$.
	We use the standard singular-integral representation of $D^{1/3}$: for every
	$\phi\in\mathcal S(\R^d)$,
	\[
		D^{\frac13}\phi(z)
		=
		c_d\int_{\R^d}\frac{2\phi(z)-\phi(z+h)-\phi(z-h)}{|h|^{d+\frac13}}\,\dd h,
	\]
	where $c_d>0$ is the appropriate normalisation constant; see \cite{MR3613319}. To justify Fubini
	below, we may first truncate to $\epsilon<|h|<R$ and then pass to the limit.

	For $h\in\R^d$, define the symmetric second differences
	\[
		\delta_h^x G(t,x,v):=2G(t,x,v)-G(t,x+h,v)-G(t,x-h,v),
	\]
	and
	\[
		\delta_h^y J(s,y,w):=2J(s,y,w)-J(s,y+h,w)-J(s,y-h,w).
	\]
	We claim that
	\[
		\delta_h^x(T_Jf)=T_{\delta_h^y J}f.
	\]
	Indeed, fixing $(t,x,v)$ and using the definition of $T_J$,
	\begin{align*}
		\delta_h^x(T_Jf)(t,x,v)
		 & =2T_Jf(t,x,v)-T_Jf(t,x+h,v)-T_Jf(t,x-h,v).
	\end{align*}
	In the term $T_Jf(t,x+h,v)$, make the change of variables $y\mapsto y+h$ and in
	the term $T_Jf(t,x-h,v)$, make the change of variables $y\mapsto y-h$. This gives
	\begin{align*}
		\delta_h^x(T_Jf)(t,x,v)
		 & =\int_{\R^{1+2d}} f(t+s,x+y+sv,v+w)                           \\
		 & \qquad\qquad\cdot
		\bigl(2J(s,y,w)-J(s,y-h,w)-J(s,y+h,w)\bigr)\,\dd s\,\dd y\,\dd w \\
		 & = T_{\delta_h^y J}f(t,x,v).
	\end{align*}

	Now apply the singular-integral formula in the $x$ variable:
	\begin{align*}
		D_x^{\frac13}[T_Jf](t,x,v)
		 & =
		c_d\int_{\R^d}\frac{\delta_h^x[T_Jf](t,x,v)}{|h|^{d+\frac13}}\,\dd h
		=
		c_d\int_{\R^d}\frac{[T_{\delta_h^y J}f](t,x,v)}{|h|^{d+\frac13}}\,\dd h \\
		 & =
		[T_{\displaystyle \left(\,c_d\int_{\R^d}\frac{\delta_h^y J}{|h|^{d+\frac13}}\,\dd h \right)}](t,x,v) =[T_{D_y^{\frac13}J}f](t,x,v)
	\end{align*}
	by bilinearity.
\end{proof}

\section{Transfer-of-regularity in H\"ormander's Besov spaces} \label{sec:torHB}

We first prove the transfer-of-regularity estimate in a Besov-type norm in the
$x$-variable. These spaces were also used in H\"ormander's original work \cite{hormander_hypoelliptic_1967} in the case $p =2$. The additional term $S_1$ is measured in the critical Lebesgue
space corresponding to one derivative in the kinetic homogeneous dimension
$\homd=4d+2$.

\begin{theorem}\label{thm:kintorBesov}
	Let $p \in [\frac{\homd}{\homd-1},\infty]$ and let $q\in[1,\infty]$ be defined by
	\[
		\frac1q = \frac1p + \frac1{\homd}.
	\]
	Then there exists $C=C(d,p)>0$ such that for every
	\[
		f\in \cS(\R^{1+2d}) \cap \L^p(\R^{1+2d}),
		\qquad
		\nabla_v f\in \L^p(\R^{1+2d};\R^d),
	\]
	satisfying
	\[
		(\partial_t+v\cdot\nabla_x)f=\nabla_v\!\cdot S_0 + S_1
	\]
	with
	\[
		S_0\in \cS(\R^{1+2d};\R^d)\cap\L^p(\R^{1+2d};\R^d),
		\qquad
		S_1\in \cS(\R^{1+2d})\cap\L^q(\R^{1+2d}),
	\]
	we have
	\begin{equation}\label{eq:kintorBesov}
		\sup_{h \neq 0} \frac{\norm{\Delta_x^h f}_{\L^p_{t,x,v}}}{\abs{h}^\frac{1}{3}}
		\le
		C\bigl(
		\|S_0\|_{\L^p_{t,x,v}}
		+\|\nabla_v f\|_{\L^p_{t,x,v}}
		+\|S_1\|_{\L^q_{t,x,v}}
		\bigr).
	\end{equation}
\end{theorem}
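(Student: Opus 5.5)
Fix $h\neq 0$ and choose the mollification scale $\tau=|h|^{1/3}$. We split
\[
\Delta_x^h f=\Delta_x^h\bigl(f-T_{K_\tau}f\bigr)+\Delta_x^h T_{K_\tau}f .
\]
The first term is controlled by the defect representation \eqref{eq:repfTtauf}; we simply bound $\Delta_x^h$ by twice the norm. The second term is controlled by moving the difference onto the kernel. Since kinetic translation commutes with spatial shifts (as in the proof of Lemma \ref{lem:commuteDx}), we have $\Delta_x^h T_J f=T_{\Delta_x^{h} J}f$, up to the sign convention of the shift, which is harmless.

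For the smooth part, Lemma \ref{lem:young} with $\theta=1$ and Corollary \ref{cor:dxhK} give
\[
\|\Delta_x^h T_{K_\tau}f\|_{\L^p}\lesssim \min\{1,|h|\tau^{-3}\}\|f\|_{\L^p}.
\]
This is not acceptable, because $\|f\|_{\L^p}$ does not appear on the right-hand side of \eqref{eq:kintorBesov}. So I would not stop at a single scale. Instead I would telescope over dyadic scales: write
\[
T_{K_\tau}f=\lim_{R\to\infty}\Bigl(T_{K_R}f+\sum_{\tau\le 2^{-k}R}\bigl(T_{K_{2^{-k}R}}f-T_{K_{2^{-k+1}R}}f\bigr)\Bigr),
\]
or, more cleanly, use $f=\lim_{R\to\infty}\bigl(f-T_{K_R}f\bigr)$. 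This limit holds because $\|T_{K_R}f\|_{\L^\infty}\lesssim R^{-\homd/p}\|f\|_{\L^p}\to0$ for $p<\infty$, which follows from Lemma \ref{lem:young} with $\theta=p'$. It turns the whole problem into one about the defect.

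Concretely, I apply \eqref{eq:repfTtauf} with $\tau=R$ and let $R\to\infty$, getting
\[
f=\int_0^\infty\Bigl(T_{\kpi_r}S_0+T_{\widetilde K_r}S_1+T_{\kforc_r}\nabla_v f\Bigr)\dd r .
\]
Then
\[
\Delta_x^h f=\int_0^\infty\Bigl(T_{\Delta_x^h\kpi_r}S_0+T_{\Delta_x^h\widetilde K_r}S_1+T_{\Delta_x^h\kforc_r}\nabla_v f\Bigr)\dd r .
\]
For the $S_0$ and $\nabla_v f$ terms, Lemma \ref{lem:young} ($\theta=1$) and Corollary \ref{cor:dxhK} give the bound
\[
\int_0^\infty\min\{1,|h|r^{-3}\}\dd r\,\bigl(\|S_0\|_{\L^p}+\|\nabla_vf\|_{\L^p}\bigr)\approx |h|^{1/3}\bigl(\|S_0\|_{\L^p}+\|\nabla_vf\|_{\L^p}\bigr).
\]
The integral splits at $r=|h|^{1/3}$: it equals $|h|^{1/3}+|h|\cdot\tfrac12|h|^{-2/3}$, which is exactly the claimed scaling. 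For $S_1$, Lemma \ref{lem:young} with $1/q+1=1/\theta+1/p$ forces $1/\theta'=1/\homd$, so $\homd/\theta'=1$. Corollary \ref{cor:dxtildeK} then gives the integrand $r^{0}\min\{1,|h|r^{-3}\}$, and again the integral is $\approx|h|^{1/3}$. The constraint $p\ge \homd/(\homd-1)$ ensures $q\ge1$.

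The main obstacle is justifying the $R\to\infty$ limit and the endpoint cases. For $p=\infty$, $T_{K_R}f$ need not decay. There, however, $\Delta_x^hT_{K_R}f$ is bounded by $|h|R^{-3}\|f\|_{\L^\infty}\to0$, and this suffices, since only $\Delta_x^h f$ is needed. I would use this difference version in all cases, because it removes any need for decay of $f$ itself and requires only $f\in\L^p$. Finiteness at $r\to0$ of the defect integral is automatic from the integrable bound above. Fubini and the interchange of $\Delta_x^h$ with $\int\dd r$ are fine since $f,S_0,S_1$ are Schwartz. Note that the representation is applied on the interval $(0,R)$, and the difference estimates are uniform in $r$.
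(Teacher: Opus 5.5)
Your final argument (the ``difference version'') is correct and is exactly the paper's proof. The defect $\Delta_x^h(f-T_{K_R}f)$ is bounded uniformly in $R$ via \eqref{eq:repfTtauf}, Minkowski, Lemma~\ref{lem:young} and Corollaries~\ref{cor:dxhK}--\ref{cor:dxtildeK} through $\int_0^\infty\min\{1,|h|r^{-3}\}\dd r\approx|h|^{1/3}$, while $\|\Delta_x^hT_{K_R}f\|_{\L^p}\lesssim\min\{1,|h|R^{-3}\}\|f\|_{\L^p}\to0$; the initial choice $\tau=|h|^{1/3}$ and the pointwise limit $f=\lim_R(f-T_{K_R}f)$ are unnecessary detours.

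The only slip is notational. For the $S_1$ term the output is in $\L^p$ and the input in $\L^q$, so the Young relation should read $\frac1p+1=\frac1\theta+\frac1q$. As you wrote it, with the theorem's $q$, it would give $\frac1\theta=1+\frac1{\homd}$. The corrected relation is what yields your (correct) $\theta'=\homd$.
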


\begin{remark}
	In the case $S_1=0$, estimates of this type go back to H\"ormander
	\cite{hormander_hypoelliptic_1967}, who treated the Hilbert-space case
	$p=2$ and obtained a non-sharp gain $s<\frac13$. As observed in
	\cite{albritton2021variational}, H\"ormander's method can in fact be
	pushed to the sharp exponent $s=\frac13$. We are not aware of an adaptation
	of this argument to the case $p\neq 2$.
\end{remark}

\begin{remark}
	The exponent
	\[
		q=\frac{\homd\,p}{\homd+p}=\frac{(4d+2)p}{4d+2+p}
	\]
	is dictated by the scaling of $\widetilde K_r$. When $S_1=0$, the same proof works
	for all $p\in[1,\infty]$, since only the $\L^1$ kernel bounds from
	Corollary \ref{cor:dxhK} are used.
\end{remark}

\begin{proof}[Proof of Theorem \ref{thm:kintorBesov}]
	Set $\theta := \frac{\homd}{\homd-1}$. Then $\theta'=\homd$ and
	\[
		\frac1p + 1 = \frac1\theta + \frac1q.
	\]
	For $\tau>0$ we write
	\[
		f = f-T_{K_\tau}(f) + T_{K_\tau}(f).
	\]
	Using \eqref{eq:repfTtauf}, Minkowski's inequality, Lemma \ref{lem:young},
	Corollary \ref{cor:dxhK}, and Corollary \ref{cor:dxtildeK}, we obtain for every
	$h\in\R^d$
	\begin{align*}
		 & \norm{\Delta_x^h (f-T_{K_\tau}(f))}_{\L^p} \\
		 & \le
		\int_0^\tau
		\Bigl(
		\norm{\Delta_x^h \kpi_r}_{\L^1}\,\|S_0\|_{\L^p}
		+\norm{\Delta_x^h \kforc_r}_{\L^1}\,\|\nabla_v f\|_{\L^p}
		+\norm{\Delta_x^h \widetilde K_r}_{\L^\theta}\,\|S_1\|_{\L^q}
		\Bigr)\dd r                                   \\
		 & \lesssim
		\int_0^\tau \min\{1,\,|h|\,r^{-3}\}\dd r \,
		\bigl(
		\|S_0\|_{\L^p}
		+\|\nabla_v f\|_{\L^p}
		+\|S_1\|_{\L^q}
		\bigr).
	\end{align*}
	Since
	\[
		\int_0^\infty \min\{1,\,|h|\,r^{-3}\}\dd r \lesssim |h|^{\frac13},
	\]
	this yields
	\[
		\norm{\Delta_x^h (f-T_{K_\tau}(f))}_{\L^p}
		\lesssim
		|h|^{\frac13}
		\bigl(
		\|S_0\|_{\L^p}
		+\|\nabla_v f\|_{\L^p}
		+\|S_1\|_{\L^q}
		\bigr).
	\]
	Moreover, by \eqref{eq:defTJ},
	\[
		\Delta_x^h T_{K_\tau}(f)=T_{\Delta_x^{-h}K_\tau}(f),
	\]
	where $\Delta_x^{-h}$ acts on the spatial kernel variable. Therefore, by
	Lemma \ref{lem:young} and Corollary \ref{cor:dxhK},
	\[
		\norm{\Delta_x^h T_{K_\tau}(f)}_{\L^p}
		\le
		\norm{\Delta_x^{-h}K_\tau}_{\L^1}\norm{f}_{\L^p}
		=
		\norm{\Delta_x^hK_\tau}_{\L^1}\norm{f}_{\L^p}
		\lesssim
		\min\{1,\abs{h}\tau^{-3}\}\norm{f}_{\L^p}.
	\]
	Hence
	\[
		\sup_{h\neq 0}\frac{\norm{\Delta_x^h T_{K_\tau}(f)}_{\L^p}}{\abs{h}^{\frac13}}
		\lesssim
		\sup_{h\neq 0}\frac{\min\{1,\abs{h}\tau^{-3}\}}{\abs{h}^{\frac13}}\norm{f}_{\L^p}
		\lesssim
		\tau^{-1}\norm{f}_{\L^p},
	\]
	since $\min\{1,a\}\le a^{1/3}$ for all $a\ge0$.
	Letting $\tau\to\infty$ proves \eqref{eq:kintorBesov}.
\end{proof}

\section{Transfer-of-regularity in Sobolev spaces} \label{sec:torLp}

In the pure divergence case, the Besov estimate above already implies
$f \in \L^p_{t,v}\Wdot^{s,p}_x$ for every $s \in (0,\frac{1}{3})$.
Indeed,
\begin{equation*}
	\|f\|_{\L^p_{t,v}\Wdot^{s,p}_x}^p \simeq \int_{\R^d} \frac{\norm{\Delta_x^h f }_{\L^p_{t,x,v}}^p}{|h|^{d+sp}} \dd h,
\end{equation*}
see \cite{MR2944369}. We need, however, to invest also $f \in \L^p_{t,x,v}$ on the right-hand side of the estimate. To reach the sharp regularity $s = \frac{1}{3}$ and a
scale-invariant estimate, we use Littlewood--Paley theory together with maximal
operators on the kinetic group.

\begin{theorem}\label{thm:kintorLpGeneral}
	Let $p \in (\frac{\homd}{\homd-1},\infty)$ and let $q\in(1,\infty)$ be defined by
	\[
		\frac1q = \frac1p + \frac1{\homd}.
	\]
	Then there exists $C=C(d,p)>0$ such that for every
	\[
		f\in \cS(\R^{1+2d}) \cap \L^p(\R^{1+2d}),
		\qquad
		\nabla_v f\in \L^p(\R^{1+2d};\R^d),
	\]
	satisfying
	\[
		(\partial_t+v\cdot\nabla_x)f=\nabla_v\!\cdot S_0 + S_1
	\]
	with
	\[
		S_0\in \mathcal S(\R^{1+2d};\R^d)\cap\L^p(\R^{1+2d};\R^d),
		\qquad
		S_1\in \mathcal S(\R^{1+2d})\cap\L^q(\R^{1+2d}),
	\]
	we have
	\begin{equation}\label{eq:kintorLp-general}
		\|D_x^{\frac13} f\|_{\L^p_{t,x,v}}
		\le C\bigl(\|S_0\|_{\L^p_{t,x,v}}+\|\nabla_v f\|_{\L^p_{t,x,v}}+\|S_1\|_{\L^q_{t,x,v}}\bigr).
	\end{equation}
\end{theorem}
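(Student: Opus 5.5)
Letting $\tau\to\infty$ in \eqref{eq:repfTtauf} (the $T_{K_\tau}f$ term vanishes by the argument of Theorem \ref{thm:kintorBesov}, since $\|D_x^{1/3}T_{K_\tau} f\|_{\L^p}\lesssim \tau^{-1}\|f\|_{\L^p}$), we obtain the exact representation
\[
	f=\int_0^\infty \Bigl(T_{\kpi_r}S_0+T_{\widetilde K_r}S_1+T_{\kforc_r}\nabla_v f\Bigr)\dd r .
\]
By Lemma \ref{lem:commuteDx} it then suffices to bound the three operators
\[
	S\mapsto \int_0^\infty T_{D_y^{1/3}J_r}S\,\dd r,\qquad J_r\in\{\kpi_r,\kforc_r,\widetilde K_r\},
\]
on the appropriate $\L^p$ or $\L^q\to\L^p$ spaces. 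Lemma \ref{lem:commuteDx} is stated for Schwartz kernels, so I would first mollify $\psi$ slightly (or truncate $r\in[\epsilon,R]$) to justify it, and pass to the limit at the end.

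Applying Lemma \ref{lem:young} with Minkowski's inequality in $r$ directly fails, because $\|D_y^{1/3}J_r\|_{\L^1}\approx r^{-1}$ by scaling, so the $r$-integral diverges logarithmically. This is exactly the gap between Besov and Sobolev. I would therefore decompose dyadically in $x$. Let $P_k$ be Littlewood--Paley projections in the $x$-variable at frequency $2^{k}$. They commute with kinetic convolution because they act on the $y$-variable of the kernel, just as in Lemma \ref{lem:commuteDx}. Write $D_x^{1/3}P_k f=\int_0^\infty T_{D_y^{1/3}P_kJ_r}S\,\dd r$. For $r^3\lesssim 2^{-k}$, use the size bound together with $\|P_k D^{1/3}\|\sim 2^{k/3}$. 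For $r^3\gtrsim 2^{-k}$, use the $y$-smoothness from Lemma \ref{lem:KforcKpiloc}, where each $y$-derivative gains $r^{-3}$. Higher $y$-derivative bounds follow the same way from \textbf{(M3)}, since $\psi$ is smooth. This gives pointwise control
\[
	|D_x^{1/3}P_k f|\lesssim \sum_{j}2^{-\epsilon|j|}\,\mathcal M\bigl(\tilde P_{k+j} S\bigr),
\]
where $\mathcal M$ is the Hardy--Littlewood maximal function over kinetic balls $B^{\rm kin}_r$, composed with Euclidean maximal functions in $x$, and $\tilde P$ is a fattened projection. Concretely, I would split $J_r$ in the $y$-frequency rather than in $r$: the piece of $D_y^{1/3}J_r$ at $y$-frequency $2^k$ has $\L^1$-norm $\lesssim \min\{(2^kr^3)^{1/3},(2^kr^3)^{-2/3}\}$, which is summable over dyadic $r$ for each fixed $k$. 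This almost-orthogonality is what removes the logarithm.

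Next, I would use the Fefferman--Stein vector-valued maximal inequality on the kinetic group. This group is a homogeneous space with doubling measure under the dilations $(r^2,r^3,r)$, so the inequality holds for $1<p<\infty$. Combined with the Littlewood--Paley square function characterisation $\|g\|_{\L^p}\approx\|(\sum_k|P_kg|^2)^{1/2}\|_{\L^p}$ in $x$ for each fixed $(t,v)$, this yields
\[
	\|D_x^{1/3}f\|_{\L^p}\lesssim \|S_0\|_{\L^p}+\|\nabla_vf\|_{\L^p}.
\]
For $S_1$ I would replace the maximal function by a kinetic fractional integral of order one. This is justified because $\widetilde K_r$ carries an extra factor $r$, which gives a bound by $\int r\,\mathcal M_r\,\dd r/r$. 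Hardy--Littlewood--Sobolev on the homogeneous group with $\frac1q=\frac1p+\frac1\homd$, $1<q<p<\infty$, then gives $\|S_1\|_{\L^q}$, using a vector-valued version again inside the square function.

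The main obstacle is the almost-orthogonality step: converting the kernel estimates into a bound of $D_y^{1/3}P_kJ_r$ that is dominated, uniformly, by kinetic-ball averages with decay in $2^kr^3$. The difficulty is that $P_k$ acts only in $y$, so the convolved kernel is no longer compactly supported in $y$. I would control its Schwartz tails by a sum of averages over dilated balls $B^{\rm kin}_{2^\ell r}$ (enlarging only the $y$-side), each weighted by $2^{-N\ell}$. I expect these to still be bounded by the strong maximal function adapted to the kinetic geometry.
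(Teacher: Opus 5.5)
Your proposal is correct and follows essentially the paper's route: a Littlewood--Paley decomposition in $x$ that commutes with $T_J$, pointwise domination of $2^{j/3}|P_j T_{J_r}h|$ by $\min\{2^{j/3},2^{-2j/3}r^{-3}\}$ (integrable in $r$ uniformly in $j$) times $M_{\rm kin}(M_x(|\widetilde P_j h|))$ (for $S_1$, by the order-one kinetic maximal function, controlled via a kinetic fractional integral and Hardy--Littlewood--Sobolev), vector-valued Fefferman--Stein on the kinetic group and in $x$, and $\|D_x^{1/3}T_{K_\tau}f\|_{\L^p}\lesssim\tau^{-1}\|f\|_{\L^p}\to0$. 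The ``main obstacle'' you flag disappears if, as the paper does, you keep the frequency localisation on the function rather than on the kernel, writing $P_jh=\check\eta_j*_x\widetilde P_jh$ or $P_jh=2^{-j}\sum_\ell\partial_{x_\ell}(\Psi_{j,\ell}*_x\widetilde P_jh)$ and integrating by parts in $y$; then $J_r$ stays compactly supported in $B^{\rm kin}_{Cr}$, and no tail analysis, off-diagonal sum with weights $2^{-\epsilon|j|}$, or strong maximal function is needed (your $y$-enlarged box averages are in any case dominated by $M_{\rm kin}\circ M_x$).
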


\begin{remark}
	The estimate in Theorem \ref{thm:kintorLpGeneral} is not claimed here as new. For $S_1=0$, the sharp $\L^p$ transfer estimate goes back to Bouchut \cite{bouchut_hypoelliptic_2002}; related estimates with lower-order source terms and their formulation in kinetic Sobolev spaces are contained in \cite{AIN,an_kfpLp_2025}. The novelty of the present argument is the route to the estimate. Instead of using the Fourier symbol of $\partial_t+v\cdot\nabla_x$, characteristics after Fourier transform, or the fundamental solution of the Kolmogorov equation, we prove the estimate directly from the physical-space identity \eqref{eq:repfTtauf}. In this sense Theorem \ref{thm:kintorLpGeneral} is used as a test case for the trajectory method: the proof shows that the sharp Bouchut--H\"ormander gain follows from critical kinetic trajectories, kernel bounds, and vector-valued maximal estimates on the kinetic group.
\end{remark}

\begin{proof}[Proof of Theorem \ref{thm:kintorLpGeneral}]
	\noindent
	\textbf{Step 1: Littlewood--Paley decomposition in $x$.}
	Choose $\eta,\widetilde \eta\in \C_c^\infty(\R^d\setminus\{0\})$, both symmetric, such that
	\[
		1=\sum_{j\in\Z}\eta_j(\varphi),\qquad \varphi\neq0,
	\]
	with $\eta_j(\varphi):=\eta(2^{-j}\varphi)$ and set $\widetilde\eta_j(\varphi):=\widetilde\eta(2^{-j}\varphi)$, where $\widetilde\eta_j = 1\quad\text{on }\supp\eta_j$. Furthermore, we set
	\[
		P_j:=\eta_j(D_x),\qquad \widetilde P_j:=\widetilde\eta_j(D_x).
	\]
	For Schwartz functions we have the homogeneous Littlewood--Paley equivalence
	\begin{equation}\label{eq:LP-hom}
		\|D_x^{\frac13}u\|_{\L^p}
		\simeq
		\left\|
		\left(\sum_{j\in\Z}2^{\frac{2j}{3}}|P_j u|^2\right)^{1/2}
		\right\|_{\L^p},
	\end{equation}
	by \cite[Theorem 6.2.7]{MR3243741} and also, for every $a\in(1,\infty)$,
	\begin{equation}\label{eq:LP-square}
		\left\|
		\left(\sum_{j\in\Z}|\widetilde P_j g|^2\right)^{1/2}
		\right\|_{\L^a}
		\lesssim_a \|g\|_{\L^a}
	\end{equation}
	by \cite[Theorem 6.1.2]{MR3243734}.

	Moreover, there exists a family
	$\Psi_j=(\Psi_{j,1},\dots,\Psi_{j,d})\in \mathcal S(\R^d;\R^d)$ such that
	\[
		\sup_{j\in\Z}\|\Psi_j\|_{\L^1(\R^d)}<\infty
	\]
	and
	\begin{equation}\label{eq:Pj-grad}
		P_j g
		=
		2^{-j} \sum_{\ell = 1}^d \partial_{x_\ell}\bigl(\Psi_{j,\ell} *_x \widetilde P_j g\bigr)
		\qquad\text{for all }g\in \mathcal S'(\R^{1+2d}).
	\end{equation}
	In fact, $\widehat{\Psi_{j,\ell}}(\varphi) := -\,i\,2^j\,\frac{\varphi_\ell}{|\varphi|^2}\,\eta_j(\varphi)$, $\varphi\in\R^d \setminus \{ 0 \}$ for $\ell = 1,\dots, d$ and $j \in \Z$ do the job.

	Since $\check \eta_j$ and $\Psi_j$ are dyadic rescalings of Schwartz kernels, the standard maximal-function domination \cite[Corollary 2.1.12]{MR3243734} gives
	\begin{equation}\label{eq:max-x}
		\sup_{j \in \Z}\left(|\check\eta_j *_x g|+|\Psi_j *_x g|\right)
		\lesssim M_x(|g|),
	\end{equation}
	where $M_x$ denotes the Euclidean Hardy--Littlewood maximal operator in the $x$-variable only:
	\[
		[M_x (g)](t,x,v):=\sup_{\rho>0} \frac{1}{|B_\rho(x)|} \int_{B_\rho(x)} |g(t,y,v)|\dd y.
	\]
	We shall also use the vector-valued Fefferman--Stein inequality \cite{MR284802}, see also \cite[Theorem 5.6.6]{MR3243734}, in the $x$-variable: for every $a\in(1,\infty)$,
	\begin{equation}\label{eq:FS-x}
		\left\|
		\left(\sum_j |M_x g_j|^2\right)^{1/2}
		\right\|_{\L^a}
		\lesssim_a
		\left\|
		\left(\sum_j |g_j|^2\right)^{1/2}
		\right\|_{\L^a}.
	\end{equation}

	\smallskip

	\noindent
	\textbf{Step 2: kinetic maximal operators.}
	Recall
	\[
		B_r^{\rm kin}=\bigl\{(s,y,w)\in\R^{1+2d}: |s|<r^2,\ |y|<r^3,\ |w|<r\bigr\}
	\]
	and $|B_r^{\rm kin}|\simeq r^{\homd}$ with $\homd=4d+2$. We define
	\[
		[M_{\rm kin} (g)](t,x,v):=
		\sup_{r>0}\frac1{|B_r^{\rm kin}|}\int_{B_r^{\rm kin}} |g((t,x,v)\circ \m)|\dd \m
	\]
	and
	\[
		[M_{{\rm kin},1} (g)](t,x,v):=
		\sup_{r>0} r\,\frac1{|B_r^{\rm kin}|}\int_{B_r^{\rm kin}} |g((t,x,v)\circ \m)|\dd \m.
	\]

	Set
	\[
		\rho_{\rm box}(s,y,w):=\max\{|s|^{1/2},|y|^{1/3},|w|\}.
	\]
	Since
	\[
		(s,y,w)^{-1}=(-s,-y+sw,-w),
	\]
	we have $\rho_{\rm box}(\m^{-1})\lesssim \rho_{\rm box}(\m)$, and from
	\[
		(s,y,w)\circ(s',y',w')=(s+s',\,y+y'+s'w,\,w+w')
	\]
	we check that
	\[
		\rho_{\rm box}(\xi\circ\eta)\lesssim \rho_{\rm box}(\xi)+\rho_{\rm box}(\eta).
	\]
	Hence
	\[
		\rho_{\rm kin}(\m):=\rho_{\rm box}(\m)+\rho_{\rm box}(\m^{-1})
	\]
	is a homogeneous quasi-norm and
	\[
		\rho_{\rm box}(\m)\le \rho_{\rm kin}(\m)\lesssim \rho_{\rm box}(\m).
	\]
	In particular, the $\rho_{\rm kin}$-balls are comparable with $B_r^{\rm kin}$.
	By \cite[Propositions~1.2.3 and~1.2.4(2)]{MR3966452}, there exists a homogeneous
	quasi-norm $|\cdot|_{\mathrm h}$ satisfying the triangle inequality and equivalent to
	$\rho_{\rm kin}$. Therefore, for some $C\ge1$,
	\[
		B_{r/C}^{\mathrm h}\subset B_r^{\rm kin}\subset B_{Cr}^{\mathrm h},
		\qquad
		B_r^{\mathrm h}:=\{\m\in\R^{1+2d}:|\m|_{\mathrm h}<r\},
	\]
	and $\abs{B_r^{\mathrm h}}\simeq \abs{B_r^{\rm kin}}\simeq r^{\homd}$.

	Let $M_{\mathrm h}$ denote the non-centred Hardy--Littlewood maximal operator
	associated with the left-invariant metric
	\[
		d_{\mathrm h}(z,\zeta):=\abs{z^{-1}\circ \zeta}_{\mathrm h}.
	\]
	Since each set $z\circ B_r^{\rm kin}$ is contained in a $d_{\mathrm h}$-ball of radius
	$Cr$ and comparable measure, we have
	\[
		M_{\rm kin}g(z)\lesssim M_{\mathrm h}g(z).
	\]
	The vector-valued Fefferman--Stein inequality on spaces of homogeneous type now yields
	\begin{equation}\label{eq:FS-kin}
		\left\|
		\left(\sum_j |M_{\rm kin} h_j|^2\right)^{1/2}
		\right\|_{\L^p}
		\lesssim
		\left\|
		\left(\sum_j |h_j|^2\right)^{1/2}
		\right\|_{\L^p},
	\end{equation}
	see \cite[Theorem~1.2]{MR2542655}.

	For the order-one operator, we argue through a fractional integral instead of
	invoking a separate vector-valued fractional maximal theorem. Define
	\[
		[I_1 g](z):=
		\int_{\R^{1+2d}}\frac{|g(z\circ \m)|}{|\m|_{\mathrm h}^{\homd-1}}\,\dd \m
		=
		\int_{\R^{1+2d}}\frac{|g(\zeta)|}{|z^{-1}\circ \zeta|_{\mathrm h}^{\homd-1}}\,\dd \zeta.
	\]
	Because $B_r^{\rm kin}\subset B_{Cr}^{\mathrm h}$, we have $|\m|_{\mathrm h}\lesssim r$
	on $B_r^{\rm kin}$. Using $\abs{B_r^{\rm kin}}\simeq r^{\homd}$, we obtain
	\[
		r\,\frac1{|B_r^{\rm kin}|}\int_{B_r^{\rm kin}}|g(z\circ \m)|\,\dd \m
		\lesssim
		\int_{B_r^{\rm kin}}\frac{|g(z\circ \m)|}{|\m|_{\mathrm h}^{\homd-1}}\,\dd \m
		\le I_1 g(z),
	\]
	hence
	\[
		M_{{\rm kin},1}g\lesssim I_1 g.
	\]

	Now set
	\[
		H(z):=\left(\sum_j |h_j(z)|^2\right)^{1/2}.
	\]
	By Minkowski's integral inequality,
	\[
		\left(\sum_j |I_1 h_j(z)|^2\right)^{1/2}
		\le
		\int_{\R^{1+2d}}\frac{\left(\sum_j |h_j(z\circ \m)|^2\right)^{1/2}}{|\m|_{\mathrm h}^{\homd-1}}\,\dd \m
		=
		I_1 H(z).
	\]
	Finally, if $\phi\in \L^{p'}$, then, using the definition of $I_1$ with absolute values,
	\begin{align*}
		\left|\int_{\R^{1+2d}} I_1 g(z)\,\phi(z)\,\dd z\right|
		 & \le
		\int_{\R^{1+2d}} I_1 g(z)\,|\phi(z)|\,\dd z \\
		 & =
		\iint_{\R^{1+2d}\times\R^{1+2d}}
		\frac{|g(\zeta)|\,|\phi(z)|}{|z^{-1}\circ \zeta|_{\mathrm h}^{\homd-1}}
		\,\dd \zeta\,\dd z                          \\
		 & \lesssim
		\|g\|_{\L^q}\|\phi\|_{\L^{p'}},
	\end{align*}
	by the Hardy--Littlewood--Sobolev inequality on homogeneous groups, i.e.\ \cite[Theorem~5.3.1]{MR3966452} with $\alpha=0$ and $\lambda=\homd-1$.
	Therefore
	\[
		\|I_1 g\|_{\L^p}\lesssim \|g\|_{\L^q},
		\qquad
		\frac1p=\frac1q-\frac1{\homd}.
	\]
	Combining this with the previous two displays, we conclude that
	\begin{equation}\label{eq:FS-kin-frac}
		\left\|
		\left(\sum_j |M_{{\rm kin},1} h_j|^2\right)^{1/2}
		\right\|_{\L^p}
		\lesssim
		\left\|
		\left(\sum_j |h_j|^2\right)^{1/2}
		\right\|_{\L^q}.
	\end{equation}

	\noindent
	\textbf{Step 3: pointwise domination for the kinetic kernels.}
	Fix $z=(t,x,v)\in\R^{1+2d}$. By \eqref{eq:Tjalternative},
	\begin{equation}\label{eq:TJ-left-translate}
		[T_{J_r}h](z)
		=
		\int_{\R^{1+2d}} J_r(s,y,w)\cdot h(z \circ (s,y,w)) \dd (s,y,w).
	\end{equation}
	For \(J_r\in\{\kpi_r,\kforc_r\}\), Lemma \ref{lem:KforcKpiloc} gives
	\[
		\supp J_r \subset B_{C_0r}^{\rm kin},
		\qquad
		|J_r|\lesssim r^{-\homd},
		\qquad
		|\partial_{y_\ell}J_r|\lesssim r^{-\homd-3}
	\]
	for some $C_0>0$ independent of $r$. Therefore
	\begin{align}
		|T_{J_r}h(z)|
		 & \le
		\|J_r\|_{\L^\infty}
		\int_{B_{C_0r}^{\rm kin}} |h(z\circ \m)| \dd \m
		\lesssim
		M_{\rm kin}(|h|)(z), \label{eq:max-bounds-J-1}
	\end{align}
	and similarly
	\begin{align}
		|T_{\partial_{y_\ell}J_r}h(z)|
		 & \le
		\|\partial_{y_\ell}J_r\|_{\L^\infty}
		\int_{B_{C_0r}^{\rm kin}} |h(z\circ \m)| \dd \m
		\lesssim
		r^{-3}M_{\rm kin}(|h|)(z). \label{eq:max-bounds-J-2}
	\end{align}

	For \(\widetilde K_r\), Lemma \ref{lem:KforcKpiloc} gives
	\[
		\supp \widetilde K_r \subset B_{C_0r}^{\rm kin},
		\qquad
		|\widetilde K_r|\lesssim r^{-\homd+1},
		\qquad
		|\partial_{y_\ell}\widetilde K_r|\lesssim r^{-\homd-2}.
	\]
	Hence
	\begin{align}
		|T_{\widetilde K_r}h(z)|
		 & \lesssim
		r^{-\homd+1}\int_{B_{C_0r}^{\rm kin}} |h(z\circ \m)| \dd \m
		\lesssim
		M_{{\rm kin},1}(|h|)(z), \label{eq:max-bounds-tilde-1}
	\end{align}
	and
	\begin{align}
		|T_{\partial_{y_\ell}\widetilde K_r}h(z)|
		 & \lesssim
		r^{-\homd-2}\int_{B_{C_0r}^{\rm kin}} |h(z\circ \m)| \dd \m
		\lesssim
		r^{-3}M_{{\rm kin},1}(|h|)(z). \label{eq:max-bounds-tilde-2}
	\end{align}

	\smallskip

	\noindent
	\textbf{Step 4: dyadic estimate for \(f-T_{K_\tau}f\).}
	By \eqref{eq:repfTtauf},
	\[
		f-T_{K_\tau}f
		=
		\int_0^\tau T_{\kpi_r}(S_0)\dd r
		+
		\int_0^\tau T_{\widetilde K_r}(S_1)\dd r
		+
		\int_0^\tau T_{\kforc_r}(\nabla_v f)\dd r .
	\]
	Since $P_j$ acts only in the $x$-variable, it commutes with each $T_J$.

	Fix \(h\in \mathcal S(\R^{1+2d})\). For the low-frequency bound, we use
	$P_j h=\check\eta_j *_x \widetilde P_j h$ together with \eqref{eq:max-x},
	\eqref{eq:max-bounds-J-1}, and \eqref{eq:max-bounds-tilde-1}. This yields
	\begin{align}
		|T_{\kpi_r}(P_j h)|+|T_{\kforc_r}(P_j h)|
		 & \lesssim
		M_{\rm kin}(M_x(|\widetilde P_j h|)), \label{eq:low-kin} \\
		|T_{\widetilde K_r}(P_j h)|
		 & \lesssim
		M_{{\rm kin},1}(M_x(|\widetilde P_j h|)). \label{eq:low-tilde-new}
	\end{align}

	For the high-frequency bound, we use \eqref{eq:Pj-grad} and integrate by parts
	in the kernel variable $y$. In the scalar case this gives
	\[
		T_{J_r}(P_j h)
		=
		-\,2^{-j}\sum_{\ell=1}^d
		T_{\partial_{y_\ell}J_r}\bigl(\Psi_{j,\ell} *_x \widetilde P_j h\bigr),
	\]
	and in the vector-valued case the same identity holds componentwise. Using
	\eqref{eq:max-x}, \eqref{eq:max-bounds-J-2}, and \eqref{eq:max-bounds-tilde-2}, we obtain
	\begin{align}
		|T_{\kpi_r}(P_j h)|+|T_{\kforc_r}(P_j h)|
		 & \lesssim
		2^{-j}r^{-3}\,M_{\rm kin}(M_x(|\widetilde P_j h|)), \label{eq:high-kin} \\
		|T_{\widetilde K_r}(P_j h)|
		 & \lesssim
		2^{-j}r^{-3}\,M_{{\rm kin},1}(M_x(|\widetilde P_j h|)). \label{eq:high-tilde-new}
	\end{align}

	Combining \eqref{eq:low-kin}--\eqref{eq:high-tilde-new} yields
	\begin{align*}
		2^{\frac j3}|T_{\kpi_r}(P_j h)| + 2^{\frac j3}|T_{\kforc_r}(P_j h)|
		 & \lesssim
		\min\Bigl\{2^{\frac j3},\,2^{-\frac{2j}{3}}r^{-3}\Bigr\}
		M_{\rm kin}(M_x(|\widetilde P_j h|)), \\
		2^{\frac j3}|T_{\widetilde K_r}(P_j h)|
		 & \lesssim
		\min\Bigl\{2^{\frac j3},\,2^{-\frac{2j}{3}}r^{-3}\Bigr\}
		M_{{\rm kin},1}(M_x(|\widetilde P_j h|)).
	\end{align*}
	Applying this with $h=S_0$, $h=S_1$, and $h=\nabla_v f$, we obtain
	\begin{align*}
		 & 2^{\frac j3}|P_j (f-T_{K_\tau}f)|                                       \\
		 & \lesssim
		\left(\int_0^\tau \min\{2^{\frac j3},2^{-\frac{2j}{3}}r^{-3}\}\dd r\right) \\
		 & \qquad \cdot
		\Bigl(
		M_{\rm kin}(M_x(|\widetilde P_j S_0|))
		+
		M_{\rm kin}(M_x(|\widetilde P_j \nabla_v f|))
		+
		M_{{\rm kin},1}(M_x(|\widetilde P_j S_1|))
		\Bigr).
	\end{align*}
	Since
	\[
		\int_0^\infty \min\{2^{\frac j3},2^{-\frac{2j}{3}}r^{-3}\}\dd r \lesssim 1
	\]
	uniformly in $j\in\Z$, it follows that
	\begin{align}\label{eq:pointwise-u-general}
		 & 2^{\frac j3}|P_j (f-T_{K_\tau}f)| \\
		 & \lesssim
		M_{\rm kin}(M_x(|\widetilde P_j S_0|))
		+
		M_{\rm kin}(M_x(|\widetilde P_j \nabla_v f|))
		+
		M_{{\rm kin},1}(M_x(|\widetilde P_j S_1|)). \nonumber
	\end{align}

	We now take the $\ell^2$ square function, then the $\L^p$ norm, and use
	\eqref{eq:LP-hom}, \eqref{eq:FS-kin}, \eqref{eq:FS-kin-frac}, \eqref{eq:FS-x},
	and \eqref{eq:LP-square}:
	\begin{align*}
		 & \|D_x^{\frac13}(f-T_{K_\tau}f)\|_{\L^p} \\
		 & \lesssim
		\left\|
		\left(\sum_j 2^{\frac{2j}{3}}|P_j (f-T_{K_\tau}f)|^2\right)^{1/2}
		\right\|_{\L^p}                            \\
		 & \lesssim
		\left\|
		\left(\sum_j |M_{\rm kin}(M_x(|\widetilde P_j S_0|))|^2\right)^{1/2}
		\right\|_{\L^p}
		\\
		 & \quad+
		\left\|
		\left(\sum_j |M_{\rm kin}(M_x(|\widetilde P_j \nabla_v f|))|^2\right)^{1/2}
		\right\|_{\L^p} 		+
		\left\|
		\left(\sum_j |M_{{\rm kin},1}(M_x(|\widetilde P_j S_1|))|^2\right)^{1/2}
		\right\|_{\L^p}                            \\
		 & \lesssim
		\left\|
		\left(\sum_j |\widetilde P_j S_0|^2\right)^{1/2}
		\right\|_{\L^p}
		+
		\left\|
		\left(\sum_j |\widetilde P_j \nabla_v f|^2\right)^{1/2}
		\right\|_{\L^p}
		+
		\left\|
		\left(\sum_j |\widetilde P_j S_1|^2\right)^{1/2}
		\right\|_{\L^q}                            \\
		 & \lesssim
		\|S_0\|_{\L^p}+\|\nabla_v f\|_{\L^p}+\|S_1\|_{\L^q}.
	\end{align*}
	Thus
	\begin{equation}\label{eq:u-tau-est-general}
		\|D_x^{\frac13}(f-T_{K_\tau}f)\|_{\L^p}
		\lesssim \|S_0\|_{\L^p}+\|\nabla_v f\|_{\L^p}+\|S_1\|_{\L^q},
	\end{equation}
	uniformly in $\tau>0$.

	\smallskip

	\noindent
	\textbf{Step 5: estimate of \(D_x^{1/3}T_{K_\tau}f\).}
	By Lemma~\ref{lem:commuteDx},
	\[
		D_x^{\frac13}T_{K_\tau}f = T_{D_x^{\frac13}K_\tau}f,
	\]
	where $D_x^{1/3}$ acts on the spatial kernel variable.
	Hence Young's inequality gives
	\[
		\|D_x^{\frac13}T_{K_\tau}f\|_{\L^p}
		\le
		\|D_x^{\frac13}K_\tau\|_{\L^1_{s,y,w}}\,
		\|f\|_{\L^p}.
	\]
	Since $0<\frac13<1$ and $K_\tau$ is smooth and compactly supported, the singular-integral formula for fractional derivatives \cite{MR3613319} yields
	\[
		D_y^{\frac13}K_\tau(s,y,w)
		=
		c_d\int_{\R^d}\frac{K_\tau(s,y,w)-K_\tau(s,y-h,w)}{|h|^{d+\frac13}}\,\dd h.
	\]
	Taking the $\L^1_{s,y,w}$ norm and using Fubini, we obtain
	\[
		\|D_x^{\frac13}K_\tau\|_{\L^1}
		\lesssim
		\int_{\R^d}\frac{\|\Delta_x^h K_\tau\|_{\L^1}}{|h|^{d+\frac13}}\,\dd h.
	\]
	By Corollary~\ref{cor:dxhK} we have $\|\Delta_x^h K_\tau\|_{\L^1}\lesssim \min\{1,|h|\tau^{-3}\}$.
	Therefore
	\begin{align*}
		\|D_y^{\frac13}K_\tau\|_{\L^1}
		 & \lesssim
		\int_{\R^d}\frac{\min\{1,|h|\tau^{-3}\}}{|h|^{d+\frac13}}\,\dd h \\
		 & \lesssim
		\int_0^{\tau^3}\tau^{-3}\rho^{-\frac13}\dd \rho
		+
		\int_{\tau^3}^\infty \rho^{-\frac43}\dd \rho
		\lesssim \tau^{-1}.
	\end{align*}
	Hence
	\[
		\|D_x^{\frac13}T_{K_\tau}f\|_{\L^p}
		\lesssim
		\tau^{-1}\|f\|_{\L^p}\to 0
		\qquad\text{as }\tau\to\infty.
	\]
	Together with \eqref{eq:u-tau-est-general}, this proves \eqref{eq:kintorLp-general}.
\end{proof}

We obtain the following multiplicative estimates.

\begin{coro}\label{cor:kintorLpMult}
	Under the assumptions of Theorem \ref{thm:kintorLpGeneral}, we have the multiplicative estimate
	\begin{equation}\label{eq:kintorLp-mult-sum}
		\|D_x^{\frac13}f\|_{\L^p}
		\lesssim
		\|\nabla_v f\|_{\L^p}^{\frac23}\|S_0\|_{\L^p}^{\frac13}
		+
		\|\nabla_v f\|_{\L^p}^{\frac{5d+1}{3(3d+1)}}
		\|S_1\|_{\L^q}^{\frac{2(2d+1)}{3(3d+1)}}.
	\end{equation}
\end{coro}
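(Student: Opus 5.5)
The plan is to derive the multiplicative estimate from the additive estimate \eqref{eq:kintorLp-general} of Theorem \ref{thm:kintorLpGeneral} by a scaling argument. The point is that the transport operator $\partial_t+v\cdot\nabla_x$ is invariant under a two-parameter family of anisotropic dilations, not just the kinetic one. For $\alpha,\gamma>0$ set $\beta:=\alpha\gamma$ and
\[
	g(t,x,v):=f(\alpha t,\beta x,\gamma v).
\]
A direct computation gives $(\partial_t+v\cdot\nabla_x)g=\alpha\,[(\partial_t+v\cdot\nabla_x)f](\alpha t,\beta x,\gamma v)$. Hence $g$ satisfies the hypotheses of Theorem \ref{thm:kintorLpGeneral} with the following data:
\begin{itemize}
	\item $S_0^g=\frac{\alpha}{\gamma}S_0(\alpha t,\beta x,\gamma v)$ and $S_1^g=\alpha S_1(\alpha t,\beta x,\gamma v)$;
	\item $\nabla_v g=\gamma(\nabla_v f)(\alpha t,\beta x,\gamma v)$ and $D_x^{1/3}g=\beta^{1/3}(D_x^{1/3}f)(\alpha t,\beta x,\gamma v)$.
\end{itemize}
All of these remain Schwartz functions.

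Next I track the Jacobian factors. Every $\L^p$ norm picks up the common factor $(\alpha\beta^d\gamma^d)^{-1/p}$. The $\L^q$ norm picks up the same factor times $(\alpha\beta^d\gamma^d)^{-1/\homd}$, because $\frac1q=\frac1p+\frac1\homd$. Cancelling the common factor and writing $A=\|D_x^{1/3}f\|_{\L^p}$, $B_0=\|S_0\|_{\L^p}$, $N=\|\nabla_v f\|_{\L^p}$, $B_1=\|S_1\|_{\L^q}$, Theorem \ref{thm:kintorLpGeneral} applied to $g$ yields
\[
	A\lesssim \alpha^{\frac23}\gamma^{-\frac43}B_0+\alpha^{-\frac13}\gamma^{\frac23}N+\alpha^{\frac23}\gamma^{-\frac13}\bigl(\alpha^{1+d}\gamma^{2d}\bigr)^{-\frac1\homd}B_1 .
\]
Now substitute $\alpha=\mu\gamma^2$. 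The powers of $\gamma$ cancel in every term, which is a consistency check that the kinetic dilation is already built into the theorem. What remains is
\[
	A\lesssim \mu^{\frac23}B_0+\mu^{-\frac13}N+\mu^{\kappa}B_1,\qquad \kappa=\frac23-\frac{d+1}{4d+2}=\frac{5d+1}{6(2d+1)}>0,
\]
valid for every $\mu>0$.

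The final step is an elementary optimisation in $\mu$. Define $\mu_1:=N/B_0$, which balances the first two terms, and $\mu_2:=(N/B_1)^{1/(\kappa+1/3)}$ with $\kappa+\frac13=\frac{3d+1}{2(2d+1)}$, which balances the last two. Choose $\mu=\min\{\mu_1,\mu_2\}$.
\begin{itemize}
	\item The increasing terms $\mu^{2/3}B_0$ and $\mu^\kappa B_1$ are then bounded by their values at $\mu_1$ and $\mu_2$ respectively. These coincide with $\mu_1^{-1/3}N$ and $\mu_2^{-1/3}N$.
	\item The decreasing term satisfies $\mu^{-1/3}N=\max\{\mu_1^{-1/3}N,\mu_2^{-1/3}N\}$.
\end{itemize}
Consequently $A\lesssim \mu_1^{-1/3}N+\mu_2^{-1/3}N$. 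Computing the exponents gives
\[
	\mu_1^{-1/3}N=N^{2/3}B_0^{1/3},\qquad \mu_2^{-1/3}N=N^{1-\frac{2(2d+1)}{3(3d+1)}}B_1^{\frac{2(2d+1)}{3(3d+1)}}.
\]
Since $1-\frac{4d+2}{9d+3}=\frac{5d+1}{3(3d+1)}$, this is exactly \eqref{eq:kintorLp-mult-sum}.

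The only real obstacle is degenerate cases, where some of $N,B_0,B_1$ vanish. I would handle these by letting $\mu\to0$ or $\mu\to\infty$ in the bound valid for all $\mu>0$. For example, if $B_0=B_1=0$, letting $\mu\to\infty$ gives $A=0$; if $N=0$, letting $\mu\to0$ gives $A=0$. Alternatively, one can replace $B_0,B_1$ by $B_0+\epsilon$, $B_1+\epsilon$ and let $\epsilon\to0$.
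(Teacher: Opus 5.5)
Your proposal is correct and follows essentially the same route as the paper. After the substitution $\alpha=\mu\gamma^2$, your two-parameter dilation collapses to the paper's one-parameter scaling $f(\lambda t,\lambda x,v)$ (take $\gamma=1$, $\lambda=\mu$), and the resulting inequality $\|D_x^{1/3}f\|_{\L^p}\lesssim \mu^{-1/3}\|\nabla_v f\|_{\L^p}+\mu^{2/3}\|S_0\|_{\L^p}+\mu^{\kappa}\|S_1\|_{\L^q}$ with $\kappa=\sigma_d$ is exactly the paper's. Your choice $\mu=\min\{\mu_1,\mu_2\}$ is a compact form of the paper's case distinction between $\lambda_0\le\lambda_1$ and $\lambda_1\le\lambda_0$, and you handle the degenerate cases the same way.
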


\begin{proof}
	Set
	\[
		A:=\|D_x^{\frac13}f\|_{\L^p},
		\qquad
		B:=\|\nabla_v f\|_{\L^p},
		\qquad
		C:=\|S_0\|_{\L^p},
		\qquad
		D:=\|S_1\|_{\L^q}.
	\]
	For $\lambda>0$ define
	\[
		f_\lambda(t,x,v):=f(\lambda t,\lambda x,v),
	\]
	and
	\[
		S_{0,\lambda}(t,x,v):=\lambda S_0(\lambda t,\lambda x,v),
		\qquad
		S_{1,\lambda}(t,x,v):=\lambda S_1(\lambda t,\lambda x,v).
	\]
	Then
	\[
		(\partial_t+v\cdot\nabla_x)f_\lambda
		=
		\nabla_v\!\cdot S_{0,\lambda}+S_{1,\lambda}.
	\]
	Applying Theorem \ref{thm:kintorLpGeneral} to $f_\lambda$ gives
	\[
		\|D_x^{\frac13}f_\lambda\|_{\L^p}
		\lesssim
		\|\nabla_v f_\lambda\|_{\L^p}
		+\|S_{0,\lambda}\|_{\L^p}
		+\|S_{1,\lambda}\|_{\L^q}.
	\]
	The scaling of the norms is
	\[
		\|D_x^{\frac13}f_\lambda\|_{\L^p}
		=
		\lambda^{\frac13-\frac{d+1}{p}}A,
		\qquad
		\|\nabla_v f_\lambda\|_{\L^p}
		=
		\lambda^{-\frac{d+1}{p}}B,
	\]
	\[
		\|S_{0,\lambda}\|_{\L^p}
		=
		\lambda^{1-\frac{d+1}{p}}C,
		\qquad
		\|S_{1,\lambda}\|_{\L^q}
		=
		\lambda^{1-\frac{d+1}{q}}D.
	\]
	Hence
	\[
		A
		\lesssim
		\lambda^{-\frac13}B
		+\lambda^{\frac23}C
		+\lambda^{1-\frac{d+1}{q}-\frac13+\frac{d+1}{p}}D
		=
		\lambda^{-\frac13}B+\lambda^{\frac23}C+\lambda^{\sigma_d}D
	\]
	for every $\lambda>0$. Taking the infimum yields
	\begin{equation}\label{eq:kintorLp-opt}
		\|D_x^{\frac13}f\|_{\L^p}
		\lesssim
		\inf_{\lambda>0}
		\Bigl(
		\lambda^{-\frac13}\|\nabla_v f\|_{\L^p}
		+\lambda^{\frac23}\|S_0\|_{\L^p}
		+\lambda^{\sigma_d}\|S_1\|_{\L^q}
		\Bigr),
	\end{equation}
	where
	\[
		\sigma_d
		:=
		\frac23-\frac{d+1}{\homd}
		=
		\frac{5d+1}{6(2d+1)}.
	\]

	If $B=0$, then \eqref{eq:kintorLp-opt} gives
	\[
		A\lesssim \lambda^{\frac23}C+\lambda^{\sigma_d}D
	\]
	for every $\lambda>0$. Since $\sigma_d>0$, letting $\lambda\downarrow0$ yields
	$A=0$, so \eqref{eq:kintorLp-mult-sum} follows trivially. Hence we may assume
	in the remainder that $B>0$.

	If $C=D=0$, then \eqref{eq:kintorLp-opt} gives
	\[
		A\lesssim \lambda^{-\frac13}B
	\]
	for every $\lambda>0$, so letting $\lambda\to\infty$ yields $A=0$. Thus in what
	follows we may assume that at least one of $C,D$ is nonzero.

	If $D=0$, choose $\lambda=\frac{B}{C}$. Then the first two terms in
	\eqref{eq:kintorLp-opt} are equal, and therefore
	\[
		A\lesssim B^{\frac23}C^{\frac13}.
	\]
	If $C=0$, choose
	\[
		\lambda=\Bigl(\frac{B}{D}\Bigr)^{\frac{1}{\sigma_d+\frac13}}.
	\]
	Then the first and third terms in \eqref{eq:kintorLp-opt} are equal, and hence
	\[
		A
		\lesssim
		B^{\frac{\sigma_d}{\sigma_d+\frac13}}
		D^{\frac{\frac13}{\sigma_d+\frac13}}
		=
		B^{\frac{5d+1}{3(3d+1)}}D^{\frac{2(2d+1)}{3(3d+1)}}.
	\]

	If $C,D>0$, set
	\[
		\lambda_0:=\frac{B}{C},
		\qquad
		\lambda_1:=\Bigl(\frac{B}{D}\Bigr)^{\frac{1}{\sigma_d+\frac13}}.
	\]
	At $\lambda=\lambda_0$ the first two terms in \eqref{eq:kintorLp-opt} are comparable:
	\[
		\lambda_0^{-\frac13}B=\lambda_0^{\frac23}C=B^{\frac23}C^{\frac13}.
	\]
	At $\lambda=\lambda_1$ the first and third terms are comparable:
	\[
		\lambda_1^{-\frac13}B=\lambda_1^{\sigma_d}D
		=
		B^{\frac{\sigma_d}{\sigma_d+\frac13}}
		D^{\frac{\frac13}{\sigma_d+\frac13}}
		=
		B^{\frac{5d+1}{3(3d+1)}}D^{\frac{2(2d+1)}{3(3d+1)}}.
	\]

	If $\lambda_0\le \lambda_1$, then
	\[
		\lambda_0^{\sigma_d}D
		\le
		\lambda_1^{\sigma_d}D
		=
		\lambda_1^{-\frac13}B
		\le
		\lambda_0^{-\frac13}B,
	\]
	so choosing $\lambda=\lambda_0$ in \eqref{eq:kintorLp-opt} gives
	\[
		A\lesssim B^{\frac23}C^{\frac13}.
	\]
	If $\lambda_1\le \lambda_0$, then
	\[
		\lambda_1^{\frac23}C
		\le
		\lambda_0^{\frac23}C
		=
		\lambda_0^{-\frac13}B
		\le
		\lambda_1^{-\frac13}B,
	\]
	so choosing $\lambda=\lambda_1$ in \eqref{eq:kintorLp-opt} gives
	\[
		A
		\lesssim
		B^{\frac{5d+1}{3(3d+1)}}D^{\frac{2(2d+1)}{3(3d+1)}}.
	\]
	This proves \eqref{eq:kintorLp-mult-sum}.
\end{proof}

\bibliographystyle{plain}

\end{document}